\renewcommand{\P}{\mathbb{P}}
\newcommand{\R}{\mathbb{R}}
\newcommand{\Q}{\mathbb{Q}}
\newcommand{\St}{\textup{Stab}^{\dagger}}
\newcommand{\Z}{\mathbb{Z}}
\newcommand{\AutD}{\textup{Aut}(\textup{D}^{\textup{b}}(}
\newcommand{\MYp}{M_{\sigma_{\scaleto{+}{4pt}}}^Y(v)}
\newcommand{\MYm}{M_{\sigma_{\scaleto{-}{4pt}}}^Y(v)}
\newcommand{\MYtp}{M_{\widetilde{\sigma}_{\scaleto{+}{4pt}}}^{\widetilde{Y}}(\pi^*(v))}
\newcommand{\MYtm}{M_{\widetilde{\sigma}_{\scaleto{-}{4pt}}}^{\widetilde{Y}}(\pi^*(v))}
\newcommand{\Db}{\textup{D}^{\textup{b}}}
\newcommand{\Dbi}{\textup{D}_{i^*}^{\textup{b}}}
\newcommand{\Pic}{\textup{Pic}}
\newcommand{\NS}{\textup{NS}}
\newcommand\Wtilde{\stackrel{\sim}{\smash{\sigma}\rule{0pt}{0.3ex}}}
\newcommand{\CH}{\textup{CH}}
\newtheorem{thm}{Theorem}[section]
\newtheorem{prop}[thm]{Proposition}
\newtheorem{lem}[thm]{Lemma}
\newtheorem{cor}[thm]{Corollary}
\numberwithin{equation}{section}
\theoremstyle{definition}
\newtheorem{definition}[thm]{Definition}
\begin{document}
	\title[]{BIRATIONAL GEOMETRY OF MODULI SPACES OF STABLE OBJECTS ON ENRIQUES SURFACES}
\author[Thorsten Beckmann]{Thorsten Beckmann}
\newcommand{\Addresses}{{
		\bigskip
		\footnotesize
		
		\textsc{Mathematisches Institut, Universität Bonn,
			Endenicher Allee 60, 53115 Bonn, Germany}\par\nopagebreak
		\textit{E-mail address}: \texttt{beckmann@math.uni-bonn.de}
}}
\maketitle

\begin{abstract}
	Using wall-crossing for K3 surfaces, we establish birational equivalence of moduli spaces of stable objects on generic Enriques surfaces for different stability conditions. As an application, we prove in the case of a Mukai vector of odd rank that they are birational to Hilbert schemes. The argument makes use of a new Chow-theoretic result, showing that moduli spaces on an Enriques surface give rise to constant cycle subvarieties of the moduli spaces of the covering K3. 
\end{abstract}

\section{Introduction}
\label{sec:in}
\let\thefootnote\relax\footnotetext{2010 \textit{Mathematics Subject Classification}. 14D20 (Primary); 18E30, 14C15, 14E30, 14J28 (Secondary).
	
	\textit{Keywords and phrases}. Bridgeland stability conditions, Constant cycle Lagrangian, Derived category, Moduli spaces of complexes.}
Moduli spaces of stable sheaves on surfaces are much studied objects. As stability depends on the choice of a polarization, it is interesting to study the dependence of the geometry of the moduli spaces on this choice. The introduction of Bridgeland stability conditions \cite{BridgelandSt} prompted new techniques, which can be applied to study this question. In \cite{BayMacMMP,BayMacPrBiGeo}, Bayer and Macrì have analyzed in detail the birational geometry of moduli spaces on a projective K3 surface $X$. In particular, they proved that crossing a wall induces a birational transformation and that every smooth $K$-trivial birational model of a moduli space can be obtained by varying stability conditions in the distinguished connected component $\St(X)$ of the stability manifold discovered by Bridgeland \cite[Def.\ 11.4]{BridgelandK3}. 

The purpose of this paper is to prove analogous results for moduli spaces of stable objects on an Enriques surface $Y$. The main technique is to consider the covering K3 surface $ \widetilde{Y}$ and use the already established results for $\widetilde{Y}$. 

More precisely, let $\pi \colon \widetilde{Y} \rightarrow Y$ be the universal covering map of degree 2. Denote by $M_{\sigma}^Y(v)$ and $M^{\widetilde{Y}}_{\widetilde{\sigma}}(\pi^*(v))$ the moduli space of $\sigma$-stable (respectively $\widetilde{\sigma}$-stable) objects on $Y$ (respectively $\widetilde{Y}$) with Mukai vector $v\in H_{\textup{alg}}^*(Y,\Z)$ (respectively $\pi^*(v)\in H^*_{\textup{alg}}(\widetilde{Y},\mathbb{Z})$). Here, $H_{\textup{alg}}^*(Y,\Z)$ respectively $H^*_{\textup{alg}}(\widetilde{Y},\mathbb{Z})$ denotes the image of the Mukai vector, see Section \ref{sec:pre}. Using the pullback along $\pi$, we get a 2:1 morphism of the moduli space $M_{\sigma}^Y(v)$ onto a Lagrangian subvariety of $M^{\widetilde{Y}}_{\widetilde{\sigma}}(\pi^*(v))$ \cite{KimEn, Nuerpro}. Applying a result by Marian and Zhao \cite{MarianZhao}, we conclude
\begin{prop}[see Proposition \ref{prop:Mod_enr_CCL}]
	Let $v \in H_{\textup{alg}}^*(Y,\Z)$ be a Mukai vector such that $\pi^*(v)\in H^*_{\textup{alg}}(\widetilde{Y},\mathbb{Z})$ is primitive and $\sigma \in \St(Y)$ a generic stability condition. The image of the morphism
	\begin{align*}
	\pi^* \colon M^Y_{\sigma}(v) \rightarrow M^{\widetilde{Y}}_{\widetilde{\sigma}}(\pi^*(v))
	\end{align*}
	is a constant cycle Lagrangian.
\end{prop}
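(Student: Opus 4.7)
The Lagrangian property of the image is already known from the work of Kim \cite{KimEn} and Nuer--Perry \cite{Nuerpro}, so the new content to establish is the constant cycle statement: the class $[\pi^* E] \in \CH_0(M^{\widetilde{Y}}_{\widetilde{\sigma}}(\pi^*(v)))$ should be independent of the choice of $E \in M^Y_\sigma(v)$. The starting observation is that the covering involution $\iota \colon \widetilde{Y} \to \widetilde{Y}$ is fixed-point free and antisymplectic, acting as $-1$ on $H^{2,0}(\widetilde{Y})$, and satisfies $\pi \circ \iota = \pi$. Consequently $\iota^* \pi^* E \cong \pi^* E$ for every $E$, so the regular involution induced by $\iota^*$ on $M^{\widetilde{Y}}_{\widetilde{\sigma}}(\pi^*(v))$ fixes the image of $\pi^*$ pointwise. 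Transporting the sign through the Mukai isomorphism shows that this induced involution is itself antisymplectic, so its fixed locus is in any case Lagrangian and contains $\pi^*(M^Y_\sigma(v))$.

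The plan is then to apply the theorem of Marian and Zhao \cite{MarianZhao}, which provides the constant cycle property for fixed loci of antisymplectic involutions on moduli spaces of stable objects on K3 surfaces, to this situation. Once the $\iota^*$-fixed locus is known to be constant cycle, the image $\pi^*(M^Y_\sigma(v))$ inherits the property as a closed subvariety, and combined with the Lagrangian statement this yields the proposition. Choosing $\widetilde{\sigma}$ as the $\iota$-invariant canonical lift of $\sigma$ is what guarantees that $\iota^*$ descends to a regular, rather than merely birational, involution of the Bridgeland moduli space; the primitivity of $\pi^*(v)$ together with the genericity of $\sigma$ ensures smoothness of the target, so that the Chow-theoretic statement is unambiguous.

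The main obstacle I anticipate is that the Marian--Zhao result is most naturally formulated in the Gieseker setting rather than for arbitrary Bridgeland stable objects. To reach the setting used here, the idea is to deform $\widetilde{\sigma}$ along a generic $\iota$-invariant path in $\St(\widetilde{Y})$ to a Gieseker chamber; by \cite{BayMacMMP} the intervening wall-crossings are birational and act predictably on $\CH_0$, so the rational equivalences produced by Marian--Zhao can be transported back to $M^{\widetilde{Y}}_{\widetilde{\sigma}}(\pi^*(v))$. Verifying that $\iota$-invariance and compatibility with $\pi^*$ are preserved along the entire deformation, and in particular that no problematic wall is encountered which destroys either compatibility, is the key technical point, and this is where the primitivity assumption on $\pi^*(v)$ is crucially used.
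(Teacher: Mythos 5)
There is a genuine gap at the heart of your argument: you invoke ``the theorem of Marian and Zhao, which provides the constant cycle property for fixed loci of antisymplectic involutions on moduli spaces of stable objects on K3 surfaces,'' but no such theorem exists. What Marian--Zhao actually prove (the Shen--Yin--Zhao conjecture) is a Chow-theoretic \emph{criterion}: for $E,F$ in $M_{\widetilde{\sigma}}(\pi^*(v))$ one has $[E]=[F]$ in $\CH_0(M_{\widetilde{\sigma}}(\pi^*(v)))$ if and only if $c_2(E)=c_2(F)$ in $\CH_0(\widetilde{Y})$. The fixed locus of an antisymplectic involution on a hyperk\"ahler variety is Lagrangian, but it is \emph{not} automatically constant cycle, and the antisymplectic property alone cannot suffice: knowing $\iota^*F\cong F$ only tells you that $c_2(F)$ is an $\iota$-invariant zero-cycle class, and distinct $\iota$-invariant classes in $\CH_0(\widetilde{Y})$ need not be equal. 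The ingredient your proof is missing is Bloch's conjecture for Enriques surfaces \cite{BKS}: since $\CH_0(Y)=\Z$, the cycles $c_2(\pi^*E)$, which all lie in the image of $\pi^*\colon \CH_0(Y)\rightarrow \CH_0(\widetilde{Y})$, coincide for every $E\in M^Y_{\sigma}(v)$, and only then does the Marian--Zhao criterion yield that all points of $\pi^*(M^Y_\sigma(v))$ are rationally equivalent. This is exactly why the argument generalizes to quotients of K3 surfaces by other finite non-symplectic group actions (where the quotient satisfies Bloch) but not to arbitrary antisymplectic involutions.

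Two smaller remarks. Your final paragraph on deforming to the Gieseker chamber is unnecessary: the Marian--Zhao result, as used here, is stated for moduli spaces of Bridgeland-stable complexes with respect to any generic $\sigma\in\St(\widetilde{Y})$, so no wall-crossing detour is needed. Your identification of the image of $\pi^*$ with the fixed locus of the involution induced by $\iota^*$, and the attribution of the Lagrangian statement to Kim and Nuer, do agree with the paper.
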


Recall that a subvariety is called constant cycle if all its points become rationally equivalent in the ambient variety. 

It turns out that this cycle-theoretic property is enough to deduce birational equivalence of moduli spaces with different stability conditions. 

\begin{thm}[see Theorem \ref{thm:mod_enr_wall_cross}]
	\label{thm:intro_wall}
	For $Y$ a generic Enriques surface and a Mukai vector $v \in H^*_{\textup{alg}}(Y,\mathbb{Z})$ such that $\pi^*(v) \in H^*_{\textup{alg}}(\widetilde{Y},\mathbb{Z})$ is primitive the moduli spaces $M_{\sigma}(v)$ and $M_{\tau}(v)$ are birationally equivalent, where $\sigma, \tau \in \St(Y)$ are generic stability conditions. 
\end{thm}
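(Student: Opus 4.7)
The plan is to descend the Bayer--Macr\`i wall-crossing theorem from the covering K3 surface $\widetilde{Y}$ to the Enriques surface $Y$, using the constant cycle Lagrangian supplied by the proposition above as the key bridge. To start, I would lift $\sigma, \tau$ to generic, $\iota$-invariant stability conditions $\widetilde{\sigma}, \widetilde{\tau} \in \St(\widetilde{Y})$ compatible with $\pi$, where $\iota$ denotes the covering involution of $\pi \colon \widetilde{Y} \to Y$. Since $\pi^*(v)$ is primitive by assumption, the Bayer--Macr\`i theorem produces a birational map
\[
\Phi \colon M^{\widetilde{Y}}_{\widetilde{\sigma}}(\pi^*(v)) \dashrightarrow M^{\widetilde{Y}}_{\widetilde{\tau}}(\pi^*(v))
\]
which is an isomorphism outside codimension two.

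Next I would analyze how $\Phi$ interacts with the constant cycle Lagrangians $L_\sigma := \pi^*(M^Y_\sigma(v))$ and $L_\tau := \pi^*(M^Y_\tau(v))$ provided by the proposition. Since a birational map of projective hyperk\"ahler manifolds induces an isomorphism on $CH_0$, the image $\Phi(L_\sigma)$ is again a constant cycle Lagrangian in the target. To identify it with $L_\tau$, I would use that $L_\sigma$ and $L_\tau$ coincide generically with the fixed loci of the anti-symplectic involutions $\iota^*$ induced by $\iota$ on the respective moduli spaces, as a stable object on $\widetilde{Y}$ is $\iota$-invariant precisely when it lies in the image of $\pi^*$. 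Choosing the lifts $\widetilde{\sigma}, \widetilde{\tau}$ $\iota$-invariantly and using the functoriality of Bayer--Macr\`i, one can arrange $\Phi$ to be $\iota^*$-equivariant; it then maps fixed locus to fixed locus, forcing $\Phi(L_\sigma) = L_\tau$ birationally.

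To pass this birational equivalence $L_\sigma \sim L_\tau$ back up to the Enriques moduli spaces, I would use that the pullback maps $\pi^* \colon M^Y_\sigma(v) \to L_\sigma$ and $\pi^* \colon M^Y_\tau(v) \to L_\tau$ are \'etale double covers whose deck transformation is tensoring by the canonical bundle $\omega_Y$, a datum depending only on $Y$ and not on the stability condition. Since birational maps of smooth projective varieties preserve \'etale double covers coming from $H^1(-,\Z/2)$ in codimension one, the equivalence $L_\sigma \sim L_\tau$ lifts to a birational equivalence $M^Y_\sigma(v) \sim M^Y_\tau(v)$.

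The main obstacle I anticipate lies in the middle step: proving $\Phi(L_\sigma) = L_\tau$. This requires controlling the indeterminacy locus of $\Phi$ so that the restriction $\Phi|_{L_\sigma}$ is a well-defined rational map (which should follow from genericity of $\sigma, \tau$ combined with the Lagrangian nature of $L_\sigma$), and then distinguishing $L_\tau$ among possible target constant cycle Lagrangians. Both the Chow-theoretic input from the proposition and the $\iota^*$-equivariance of the wall-crossing are essential here, since the former rules out contractions of $L_\sigma$ by $\Phi$ while the latter pins down the correct image component.
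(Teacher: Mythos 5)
Your overall strategy is the same as the paper's: push the Bayer--Macr\`i birational map down from $M^{\widetilde{Y}}_{\widetilde{\sigma}}(\pi^*(v))$ to the Enriques moduli space, using the constant cycle Lagrangian property to guarantee that the image of $M^Y_\sigma(v)$ is not swallowed by the indeterminacy/exceptional locus, and using $i^*$-equivariance to match up the fixed loci. However, there is a genuine gap at the step you dispose of with ``functoriality of Bayer--Macr\`i'': there is no such functoriality statement to cite. The birational map of \cite[Thm.\ 5.7]{BayMacMMP} is not canonical; its construction depends on the type of the wall being crossed, and equivariance must be verified separately in each case. This verification is the technical core of the paper's proof: for fake walls and flopping contractions the map is either an isomorphism in codimension one or a composition of spherical twists, and one needs that every spherical object which is stable for some stability condition satisfies $i^*S \cong S$ (this uses that $i^*$ acts trivially on $\St(\widetilde{Y})$, by Macr\`i--Mehrotra--Stellari); for divisorial contractions one must separately treat the Brill--Noether, Hilbert--Chow and Li--Gieseker--Uhlenbeck cases, the latter two requiring an identification of $\Db(\widetilde{Y})$ with the derived category of a (possibly twisted) Fourier--Mukai partner on which $i$ induces a compatible action on the universal sheaf, after which the map is $(\_)^{\vee}[2]$ or $(\_)^{\vee}\otimes L[2]$ with $L$ an $i^*$-invariant line bundle. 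Merely choosing $i^*$-invariant lifts of $\sigma,\tau$ (which is automatic here, since for generic $Y$ the embedding $\St(Y)\hookrightarrow\St(\widetilde{Y})$ is an isomorphism of components and $i^*$ acts trivially) does not by itself make the resulting birational map equivariant.

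Two smaller points. First, your last step --- transporting the birational equivalence of the Lagrangians back up to $M^Y_\sigma(v)$ via ``\'etale double covers classified by $H^1(-,\Z/2)$'' --- is shakier than needed: the cover $\pi^*$ is only \'etale away from objects with $E\cong E\otimes\omega_Y$, and the Lagrangians need not be smooth nor identified in codimension one. The paper avoids this: for odd rank each component of $M^Y_\sigma(v)$ embeds into the K3 moduli space (the deck transformation $E\mapsto E\otimes\omega_Y$ swaps the two determinant components), so nothing needs lifting; for even rank one uses the equivalence between sheaves on $Y$ and $i^*$-equivariant sheaves on $\widetilde{Y}$ and upgrades the isomorphism on the image to an isomorphism of equivariant objects as in Bridgeland--Maciocia. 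Second, the non-containment of $L_\sigma$ in the indeterminacy locus does not follow from ``genericity plus the Lagrangian nature'' alone; it is exactly Proposition \ref{prop:prop_cc_bir}, which needs both the constant cycle property (to bound the dimension of the constant cycle locus swept out by the contracted rational curves) and $\textup{kod}(L_\sigma)\ge 0$ (to rule out $L_\sigma$ itself being uniruled). You gesture at this correctly, but the Kodaira dimension input should be made explicit.
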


More precisely, we show that the birational transformation obtained by Bayer and Macrì \cite[Thm.\ 1.1]{BayMacMMP} for moduli spaces on K3 surfaces restricted to the moduli spaces on Enriques surfaces induces a birational transformation. To prove this, we use that these are constant cycle Lagrangians and, therefore, cannot be contained in the exceptional locus, cf.\ Proposition \ref{prop:prop_cc_bir}. 

This enables us to apply Fourier--Mukai transforms to the moduli space and change the stability condition without changing its birational type. Using this technique, one relates moduli spaces for different Mukai vectors. 

\begin{cor}[see Theorem \ref{prop:Mod_Enr_bir_Hilb}]
	Let $Y$ be an arbitrary Enriques surface and $v$ a primitive Mukai vector of odd rank. Then, for generic $\sigma \in \St(Y)$ the moduli space $M_\sigma(v)$ is birationally equivalent to some $\textup{Hilb}^n(Y)$.
\end{cor}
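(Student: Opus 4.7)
The plan is to use Theorem \ref{thm:intro_wall} to freely change the stability condition and then to use Fourier--Mukai auto-equivalences of $\Db(Y)$ to reduce an arbitrary odd-rank primitive Mukai vector $v$ to the rank-one Mukai vector $v_n = (1,0,1-n)$ whose moduli space is $\Hilb^n(Y)$.

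First I would verify that $\pi^*(v)$ is primitive on $\widetilde{Y}$, so that Theorem \ref{thm:intro_wall} is applicable. The rank component of $\pi^*(v)$ equals $r = \textup{rk}(v)$, which is odd; hence any prime $p$ dividing every component of $\pi^*(v)$ must divide $r$ and is therefore odd. Using $\pi_* \pi^* = 2$, such a $p$ would also have to divide each component of $v$, contradicting primitivity on $Y$. Granted this, Theorem \ref{thm:intro_wall} gives $M_H(v) \sim_{\textup{bir}} M_\sigma(v)$ for any generic $\sigma \in \St(Y)$. Any $\Phi \in \textup{Aut}(\Db(Y))$ then induces an isomorphism $M_\sigma(v) \cong M_{\Phi_* \sigma}(\Phi_* v)$, and whenever $\Phi_* v$ is (up to shift) again of odd rank the same argument applies to $\Phi_* v$, yielding $M_H(v) \sim_{\textup{bir}} M_H(\Phi_* v)$. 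Consequently the birational type of $M_H(v)$ is invariant under the $\textup{Aut}(\Db(Y))$-orbit of $v$ inside the subset of primitive odd-rank Mukai vectors.

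The main content is then to show that this orbit always contains a vector of the form $v_n$. Using line-bundle twists one can normalize $c_1(v)$ modulo $r$; a Fourier--Mukai transform coming from a suitable universal family on an auxiliary moduli of sheaves on $Y$ can then be used to reduce the rank while keeping it odd; iterating brings one to a rank-one vector, and a final twist by a line bundle produces $v_n$ for some $n$. The principal obstacle is precisely this rank-reduction step: unlike on a K3 surface, where the rich supply of spherical twists used by Bayer--Macrì makes the analogous orbit computation transparent, the much smaller group $\textup{Aut}(\Db(Y))$ on an Enriques surface forces one to argue more carefully, for instance by descending the relevant K3 operations along the equivariant structure of $\pi \colon \widetilde{Y} \to Y$. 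The odd-rank hypothesis is indispensable at this stage, because it is exactly what guarantees that primitivity of $\pi^*$ is preserved throughout the reduction and hence that Theorem \ref{thm:intro_wall} can be reapplied after every auto-equivalence.
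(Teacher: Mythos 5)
Your strategy for the \emph{generic} case---use Theorem \ref{thm:intro_wall} to move freely between generic stability conditions, then act by autoequivalences of $\Db(Y)$ to drive the odd-rank primitive vector $v$ into the orbit of $(1,0,1-n)$---is the same one the paper uses, and your preliminary check that $\pi^*(v)$ is primitive for odd rank is correct. But there are two genuine gaps. First, and most importantly, the statement is for an \emph{arbitrary} Enriques surface (Proposition \ref{prop:Mod_Enr_bir_Hilb} says ``not necessarily generic''), while Theorem \ref{thm:intro_wall}/\ref{thm:mod_enr_wall_cross} is only available for generic $Y$: its proof needs $\St(Y)$ to surject onto the distinguished component of $\St(\widetilde{Y})$ and the walls for $v$ and $\pi^*(v)$ to match up, which fails once $\rho(\widetilde{Y})>10$. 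Your argument therefore proves nothing for non-generic $Y$. The paper closes this by a specialization argument: place $Y$ as the special fiber of a family $\mathcal{Y}\rightarrow B$ whose general fiber is generic, form the relative Hilbert scheme and the relative moduli space of sheaves, note that uncountably many fibers are birationally equivalent by the generic case, so some component of the relative Hilbert scheme of the fiber product parametrizes birational correspondences over a dense set of $B$, and then invoke Matsusaka--Mumford to produce a birational correspondence on the special fiber. Some device of this kind is indispensable and is entirely absent from your proposal.

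Second, even in the generic case you explicitly leave the rank-reduction step as ``the principal obstacle'' without resolving it; an acknowledged obstacle is not a proof. The paper resolves it by citing concrete tools: Yoshioka's computation of the Fourier--Mukai orbit of Mukai vectors and Nuer's \emph{weakly-spherical twists} (the Enriques substitute for spherical twists, e.g.\ the twist associated to $\mathcal{O}_Y$), combined with line-bundle twists $\exp(D)$---the same mechanism that appears later in the paper's reduction of even-rank vectors to $(0,c_1,\tfrac{s}{2})$. If you want a self-contained argument you must either carry out this orbit computation or cite these results; as written, the heart of the generic case is asserted rather than proved.
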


If the rank is even, we deduce that the moduli spaces are Calabi--Yau manifolds employing results by Saccà \cite{SaccaEnriques}. To obtain the results for not necessarily generic Enriques surfaces one uses deformation theory and stability conditions in families \cite{RelativeBridgelandModuliSpaces}. 

As a final application of the birational equivalence of wall-crossing we consider the natural nef divisor classes $\ell_{\sigma}$ associated to a stability condition $\sigma$ \cite[Sec.\ 4]{BayMacPrBiGeo}. Since the birational transformation of Theorem \ref{thm:intro_wall} can be obtained as the restriction of the birational transformation of the moduli spaces on the K3 surface, we can furthermore prove that these divisors can be extended continuously to a map
\[
\ell \colon \St(Y) \rightarrow \NS(M_{\sigma}(v)),
\]
see Lemma \ref{lem:ell_agree_wall}. 
We conclude the paper by showing that the nef and semiample divisors $\ell_{\sigma_0,\pm} \in \NS (M^Y_{\sigma_{\pm}}(v))$ are big (Proposition \ref{prop:ell_big}) and discuss whether all minimal models of moduli spaces of stable objects on an Enriques surface are again a moduli space for a (possibly) different stability condition.
\subsection*{Relation to other work}
The independent preprint \cite{NuerYoshioka} deals as well with the birational geometry of moduli spaces of stable objects on Enriques surfaces. It is shown in loc.\ cit.\ that on an Enriques surface and for arbitrary Mukai vector $v$ with $v^2>0$ the moduli spaces $M_{\sigma}(v)$ and $M_{\tau}(v)$ are birational, where $\sigma$ and $\tau$ are generic stability conditions. This is established, in analogy to \cite[Thm.\ 5.7]{BayMacMMP}, by classifying all possible birational phenomena that occur on a wall for $v$, see \cite[Thm.\ 5.8]{NuerYoshioka} for the precise result. Although these results are more general than ours and yield a detailed analysis on all possible wall-crossing types, the approach in this paper is of independent interest and enables one to prove the birational equivalence in important cases in only a few pages. Via deformation theory we can obtain most of the results of \cite[Thm.\ 1.2, Thm.\ 1.3]{NuerYoshioka} as well. 
\subsection*{Acknowledgements}
First and foremost, I thank my advisor Daniel Huybrechts for answering my questions and for helpful comments on an earlier version of this paper. Moreover, I am indebted to Tim Bülles for many valuable discussions and Andrey Soldatenkov for his useful suggestions. The paper benefited from fruitful conversations with Arend Bayer, Emanuele Macrì, Gebhard Martin, and Georg Oberdieck. I am grateful for the careful reading and many suggestions on previous versions by the anonymous referees. This work is part of the author's Master's thesis. The author is partially supported by SFB/TR 45 ‘Periods, Moduli Spaces and Arithmetic of Algebraic Varieties' of the DFG (German Research Foundation).

\subsection*{Notations and conventions}
We work over the complex numbers. The bounded derived category of a smooth projective variety $X$ is denoted by $\Db(X)$. Throughout, Chow groups are the groups of cycles modulo rational equivalence. An Enriques surface is called generic if the Picard rank of its covering K3 surface is 10. In Section \ref{sec:gmes}, unless otherwise specified, we only consider generic Enriques surfaces. The moduli spaces on Enriques surfaces under consideration have two components and we frequently omit the determinant in the notation.
	\section{Constant Cycle Subvarieties}
\label{sec:cc} 
Our approach uses the following class of subvarieties.
\begin{definition}
	\label{def:CCV}
	A subvariety $Y\subset X$ is called a \textit{constant cycle subvariety} if all points in $Y$ are rationally equivalent as points in $X$. If, moreover, $X$ is a symplectic variety and $Y$ is a Lagrangian subvariety, then $Y$ is called \textit{constant cycle Lagrangian}.
\end{definition}
Observe that if $X$ is symplectic, then every constant cycle subvariety is isotropic, which follows from Roitman's Theorem \cite[Prop.\ 10.24]{VoisinBookII}. Thus, in the definition one may replace the Lagrangian assumption with a condition on the dimensions $2 \dim(Y)=\dim(X)$. 

We refer to \cite{HuybrechtsCCC,VoisinRemarkandQuestions} for further discussions on constant cycle subvarieties. For our purposes we only need the following properties.
\begin{lem}
	\label{lem:preim_cc}
	Let $X$ be a projective variety, $Y$ a smooth projective variety and consider a birational morphism $f\colon X \rightarrow Y$. Then a subvariety $Z\subset Y$ is constant cycle if and only if $f^{-1}(Z) \subset X$ is a constant cycle subvariety. 
\end{lem}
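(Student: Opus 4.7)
I would prove the two directions separately, noting that $(\Leftarrow)$ is immediate while $(\Rightarrow)$ requires real input. For $(\Leftarrow)$, given $z_1, z_2 \in Z$, I pick lifts $x_i \in f^{-1}(z_i) \subset f^{-1}(Z)$; the hypothesis gives $[x_1] = [x_2]$ in $\CH_0(X)$, and applying $f_*$ to this equation produces $[z_1] = [z_2]$ in $\CH_0(Y)$.

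For $(\Rightarrow)$, let $x_1, x_2 \in f^{-1}(Z)$ and set $z_i = f(x_i) \in Z$. Since $Z$ is constant cycle, $f_*([x_1] - [x_2]) = [z_1] - [z_2] = 0$ in $\CH_0(Y)$, so it would suffice to know that $f_*\colon \CH_0(X) \to \CH_0(Y)$ is injective. The plan is to invoke the standard fact that for a birational morphism $g\colon X' \to Y$ of smooth projective varieties the induced pushforward $g_*\colon \CH_0(X') \to \CH_0(Y)$ is an isomorphism: surjectivity follows from the formula $g_* g^* = \id$, and injectivity is obtained by using the moving lemma to represent any class in $\CH_0(X')$ by a cycle supported in the open dense locus over which $g$ is an isomorphism.

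To handle the case where $X$ is only projective (possibly singular), I would choose a resolution of singularities $\pi\colon \tilde X \to X$ and consider $\tilde f := f \circ \pi\colon \tilde X \to Y$, a birational morphism between smooth projective varieties. The smooth case applied to $\tilde f$ yields that $\tilde f^{-1}(Z)$ is constant cycle in $\tilde X$. Lifting our $x_1, x_2 \in f^{-1}(Z)$ to points $\tilde x_i \in \pi^{-1}(x_i) \subset \tilde f^{-1}(Z)$, we get $[\tilde x_1] = [\tilde x_2]$ in $\CH_0(\tilde X)$, and pushing down via $\pi_*$ gives $[x_1] = \pi_*[\tilde x_1] = \pi_*[\tilde x_2] = [x_2]$ in $\CH_0(X)$.

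The main obstacle is the cited isomorphism $g_*\colon \CH_0(X') \to \CH_0(Y)$ for birational $g$ between smooth projective varieties; this is classical (a manifestation of birational invariance of $\CH_0$ for smooth projective varieties) but deserves to be stated explicitly as the one nontrivial input. The rest of the argument—the easy direction, the lifting step, and the reduction via resolution—is essentially formal and relies only on functoriality of pushforward on $\CH_0$.
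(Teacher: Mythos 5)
Your proposal is correct and follows essentially the same route as the paper: the key input in both is that $f_*\colon \CH_0(X)\to\CH_0(Y)$ is an isomorphism for a birational morphism of smooth projective varieties (the paper phrases this via a commutative diagram of pushforwards, you phrase it pointwise), and the singular case is handled identically by passing to a resolution $\pi\colon\widetilde X\to X$ and pushing classes back down along $\pi_*$. The paper likewise cites the birational invariance of $\CH_0$ without proof, so your flagging it as the one nontrivial classical input matches its level of detail.
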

\begin{proof}
	Consider the commutative diagram
	\begin{center}
		\begin{tikzcd}
			\textup{CH}_0(f^{-1}(Z))\arrow[d,"f_*"] \arrow[r,"\iota_*"] & \textup{CH}_0(X)\arrow[d,"f_*"]\\
			\textup{CH}_0(Z) \arrow[r,"\iota_*"] & \textup{CH}_0(Y),
		\end{tikzcd}
	\end{center}
	where $\iota$ denotes the inclusion of both subvarieties. If $X$ is smooth, the birational map $f_*$ induces an isomorphism between $\textup{CH}_0(X)$ and $\textup{CH}_0(Y)$ and the assertion follows from the commutativity of the diagram. For arbitrary $X$, use a resolution of singularities $\pi \colon \widetilde{X} \rightarrow X$ and argue as above using the above diagram and the corresponding one for $\pi$. 
\end{proof}
The following result is the main ingredient for our proof of Theorem \ref{thm:intro_wall}. Recall that the proper transform of a subvariety $Z \subset X$ under a birational map $f\colon X \dashrightarrow Y$ is defined to be $p_2(p_1^{-1}(Z))$, where $p_1$ and $p_2$ are the projections from the closure of the graph of $f$ in $X\times Y$ to $X$ respectively $Y$. A point outside the maximal open subset where the rational map $f$ is defined is called a fundamental point of $f$. 
\begin{prop}
	\label{prop:prop_cc_bir}
	Let $X$ and $Y$ be smooth projective varieties and $f\colon X \dashrightarrow Y$ a birational map. Suppose $X$ is symplectic and consider an irreducible constant cycle Lagrangian $Z \subset X$ of Kodaira dimension $\textup{kod}(Z)\ge 0$. Then, not all points of the proper transform of $Z$ can be fundamental points of the birational map $f^{-1}$. 
\end{prop}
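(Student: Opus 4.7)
The plan is to argue by contradiction: suppose $Z$ is contained in the exceptional locus $\textup{E}(f)$. My goal is to show that $Z$ must then be uniruled, contradicting $\textup{kod}(Z)\ge 0$ by the theorem of Miyaoka--Mori.

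First I would choose a resolution of $f$ by a smooth variety $\widetilde{X}$ equipped with proper birational morphisms $p\colon \widetilde{X}\to X$ and $q\colon \widetilde{X}\to Y$ satisfying $f = q\circ p^{-1}$ on the isomorphism locus. Let $\widetilde{Z}\subset \widetilde{X}$ denote the strict transform of $Z$ under $p$; by construction $p|_{\widetilde{Z}}$ is birational onto $Z$. Applying Lemma~\ref{lem:preim_cc} to $p$, the preimage $p^{-1}(Z)$ is a constant cycle subvariety of $\widetilde{X}$, and therefore so is $\widetilde{Z}$. The proper transform $W := q(\widetilde{Z})\subset Y$ satisfies $\dim W < \dim \widetilde{Z}$ by the contradiction assumption that $Z$ lies in the exceptional locus.

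Next I would unpack the hypothesis: since a general point of $W$ is fundamental for $f^{-1}$, the inclusion $W \subset q(E_q)$ holds, where $E_q \subset \widetilde{X}$ denotes the $q$-exceptional locus. Because $q$ is an isomorphism on $\widetilde{X}\setminus E_q$, we have $q^{-1}(q(E_q)) = E_q$, whence $\widetilde{Z}\subset q^{-1}(W)\subset E_q$. The restriction $q|_{\widetilde{Z}}\colon \widetilde{Z}\to W$ is then a surjective morphism whose general fibre $F_w = \widetilde{Z}\cap q^{-1}(w)$ has positive dimension and sits inside the $q$-exceptional fibre $q^{-1}(w)$.

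To close the argument I would show that $F_w$ is uniruled for $w\in W$ general; this yields uniruledness of $\widetilde{Z}$, hence of $Z$, and so contradicts $\textup{kod}(Z)\ge 0$. The ambient exceptional fibres $q^{-1}(w)$ are already known to be uniruled -- in fact rationally chain connected -- by the standard results of Kawamata and Hacon--McKernan on exceptional loci of birational morphisms between smooth varieties. The main obstacle is to transfer this uniruledness to the proper subvariety $F_w \subseteq q^{-1}(w)$: without further input one only produces rational curves in $q^{-1}(w)$, not inside $F_w$. This is where I expect the constant cycle property of $\widetilde{Z}$, and hence of $F_w$, to play the essential role, forcing enough rational curves of $q^{-1}(w)$ to lie inside $F_w$ to cover it through a general point.
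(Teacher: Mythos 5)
Your setup (resolving $f$ by $\widetilde{X}\xrightarrow{p}X$, $\widetilde{X}\xrightarrow{q}Y$, passing to the strict transform $\widetilde{Z}$, and using Lemma~\ref{lem:preim_cc} to keep track of the constant cycle property) is essentially equivalent to the paper's graph construction, and you correctly locate the crux: rational curves are produced only in the exceptional fibres $q^{-1}(w)$, not a priori in $F_w=\widetilde{Z}\cap q^{-1}(w)$. But the way you propose to close this gap does not work, and this is a genuine gap rather than a routine verification. The constant cycle property does \emph{not} force rational curves of $q^{-1}(w)$ to lie inside $F_w$: every positive-dimensional subvariety of $q^{-1}(w)$ is automatically constant cycle in $\widetilde{X}$ (all points of the fibre push forward to $[w]$ and $q_*$ is an isomorphism on $\textup{CH}_0$), so for instance a curve of genus $\ge 1$ sitting inside a rational exceptional fibre is a perfectly good constant cycle subvariety that is not uniruled. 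So the strategy ``show $Z$ is uniruled and contradict $\textup{kod}(Z)\ge 0$'' cannot be completed in the branch where the rational curves escape $\widetilde{Z}$, and no amount of cycle-theoretic input will push them back in.

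The paper resolves this with a dichotomy that uses a \emph{second}, different contradiction in the escaping branch. Set $T=p\bigl(q^{-1}(q(\widetilde{Z}))\bigr)$ (in the paper's notation, $T=p_1\bigl(p_2^{-1}(p_2(p_1^{-1}(Z)))\bigr)$); by Lemma~\ref{lem:preim_cc} this is a constant cycle subvariety of $X$ containing $Z$, and it contains the images of all the rational curves in the exceptional fibres through general points of $Z$. If those curves lie in $Z$ generically, then $Z$ is uniruled, contradicting $\textup{kod}(Z)\ge 0$ --- this is your branch. If they do not, then $T$ strictly contains $Z$ and has dimension $>\dim Z=\tfrac{1}{2}\dim X$; but by Roitman's theorem constant cycle subvarieties of a symplectic variety are isotropic, hence of dimension at most $\tfrac{1}{2}\dim X$, a contradiction. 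It is this Lagrangian dimension bound, not uniruledness of $Z$, that the constant cycle hypothesis is really there to feed. As written, your argument is missing this second horn and therefore does not constitute a proof.
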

\begin{proof}
	Assume for a contradiction that all points of the proper transform of $Z$ are fundamental points. Consider the diagram
	\begin{center}
		\begin{tikzcd}
			&\Gamma \arrow[dr,"p_2"] \arrow[dl,swap, "p_1"]&\\
			X \arrow[rr, dashed , "f"]&& Y,
		\end{tikzcd}
	\end{center}
	where $\Gamma$ denotes the closure of the graph of $f$ and $p_i$ the projections from $X\times Y$. Observe that the variety
	\[T \coloneqq p_1\left(p_2^{-1}\left(p_2\left(p_1^{-1}\left( Z\right)\right)\right)\right) \subset X\] 
	is a constant cycle subvariety containing $Z$. Indeed, preimages of constant cycle subvarieties are constant cylce by Lemma \ref{lem:preim_cc} and the image of a constant cycle subvariety under a proper map is always constant cycle.
	
	The fiber $p_2^{-1}(a) \subset \Gamma$ for a fundamental point $a$ of $f^{-1}$ is uniruled \cite[Prop.\ 1.3]{KollarMori}. Thus, the images of the fibers under the morphism $p_1$ are non-trivial uniruled varieties. If for very general $x\in Z$ there exists a fiber $p_2^{-1}(a)$ with $x \in p_1(p_2^{-1}(a))$ and a rational curve $x\in C_x \subset p_1(p_2^{-1}(a))$ such that $C_x$ is contained in $Z$, then $Z$ would be uniruled, in contradiction to kod$(Z)\ge 0$.
	
	Hence, we may assume that for very general $x \in Z$ the rational curves $C_x \subset p_1(p_2^{-1}(a))$ through $x$ are not contained inside $Z$. However, then the constant cycle subvariety $T$ strictly contains $Z$ and, thus, is of larger dimension. 
	Since the dimension of a constant cycle subvariety inside a symplectic variety is bounded by half the dimension of the ambient space, we derive a contradiction. 
\end{proof}
In a similar vein one can show that constant cycle Lagrangians $Z \subset X$ of non-negative Kodaira dimension inside a smooth symplectic variety cannot be contained inside a uniruled variety $W\subset X$. Indeed, if $Z$ would be contained in $W$, there is again through every point of $Z$ an irreducible rational curve. The very general curve can again not be contained inside $Z$. However, they must be contained in the orbit \[
O_z = \{x\in X \mid [x]=[z] \textup{ in } \CH_0(X) \}
\]
under rational equivalence of a point $z \in Z$ which is a countable union of constant cycle subvarieties \cite[Lem.\ 10.7]{VoisinBookII}. Since we consider irreducible curves $C$, there must be one irreducible component of $O_z$ which contains $C$. Thus, for a very general point $z\in Z$ not contained in any other irreducible compenent of $O_z$, the irreducible rational curve $C_z$ through $z$ must be contained in $Z$. This yields a contradiction. 
\section{Review: Stability Conditions and Moduli Spaces}
\label{sec:pre}
\subsection{Bridgeland stability conditions}
\label{subsec:Bridge_stab}

Bridgeland \cite{BridgelandSt} introduced the notion of a stability condition on a triangulated category. 

\begin{definition}
	\label{def:Bridge_stab}
	Let $\mathcal{D}$ be a triangulated category. A \textit{(full numerical) stability condition} $\sigma =(Z,\mathcal{P})$ consists of a group homomorphism $Z\colon K(\mathcal{D})\rightarrow \mathbb{C}$, called the \textit{central charge}, and for each $\phi \in \R$ a full additive subcategory $\mathcal{P}(\phi) \subset \mathcal{D}$ satisfying the following conditions
	
	\begin{itemize}
		\item If $0 \neq E \in \mathcal{P}(\phi)$, then $Z(E) \in \R_{>0} e^{i\pi \phi}$.
		\item If $\phi_1 > \phi_2$ and $E_i \in \mathcal{P}(\phi_i)$, then $\textup{Hom}(E_1, E_2)=0$.
		\item $\mathcal{P}(\phi)[1]=\mathcal{P}(\phi + 1)$.
		\item Every $0 \neq E \in \mathcal{D}$ has a categorical Harder--Narasimhan filtration.
		\item $Z$ factors through the numerical Grothendieck group $K_{\textup{num}}(\mathcal{D})$.
		\item There exists a constant $C>0$ such that for all $\phi \in \R$ and objects $E \in \mathcal{P}(\phi)$ we have $\Vert E \Vert \leq C \vert Z(E) \vert$, where we fix a norm $\Vert \_ \Vert$ on $K_{\textup{num}}(\mathcal{D}) \otimes \R$. 
	\end{itemize}
\end{definition}
Objects in $\mathcal{P}(\phi)$ are \textit{semistable} of phase $\phi$ and simple objects in $\mathcal{P}(\phi)$ are called \textit{stable}. Bridgeland constructed on the set of stability conditions Stab$(\mathcal{D})$ a generalised metric to show that it is a complex manifold \cite[Thm.\ 1.2]{BridgelandSt}. We only consider stability conditions for the bounded derived category $\Db(X)$ of a smooth projective surface $X$. 

The space of stability conditions comes naturally with two group actions. The first one is the group of exact autoequivalences Aut($\Db(X)$) acting on the left via $\psi . (Z,\mathcal{P})= (Z \circ \psi_*,\psi(\mathcal{P}))$. The universal cover $\widetilde{\textup{GL}}{}^{+}(2,\R)$ of GL$^+(2,\R)$ acts via $(Z, \mathcal{P}).(A,f) =(A^{-1}\circ Z, \mathcal{P'})$, where $\mathcal{P'}(\phi)=\mathcal{P}(f(\phi))$. 

Similiar to the case of Gieseker stability, the space of stability conditions Stab$(X)$ admits for a Mukai vector $v$ a wall and chamber decomposition \cite[Sec.\ 9]{BridgelandK3}, see also \cite[Prop.\ 2.3]{BayMacPrBiGeo}. We call a stability condition \textit{generic} (for $v$) if it does not lie on any of the walls. If $v$ is primitive and $\sigma$ is generic, then $\sigma$-stability and $\sigma$-semistability coincide. Moreover, the set of stability conditions for which a given object is stable is open in $\textup{Stab}(X)$. 

Another important property of Bridgeland stability conditions is that there is a chamber, called the large volume limit, where Bridgeland and Gieseker stability coincide \cite[Sec.\ 14]{BridgelandK3}.

\subsection{Moduli spaces of stable objects on K3 and Enriques surfaces}
\label{subsec:mod_K3_Enriq}
Let $X$ be a K3 or an Enriques surface. We denote by $H^*_{\textup{alg}}(X,\Z)$ the image of the map
\begin{align*}
v=\textup{ch}(\_) \sqrt{\textup{td}(X)} \colon K_{\textup{num}}(X) \rightarrow H^*(X,\Q).
\end{align*}	
On $H^*_{\textup{alg}}(X,\Z)$ there is the Mukai pairing defined for two vectors $v=(v_0, v_2, v_4)$ and $w=(w_0, w_2, w_4)$ as
\begin{align*}
( v,w ) \coloneqq \int_X -v_0w_4 + v_2w_2 -v_4w_0.
\end{align*}
For K3 surfaces $X$, the Mukai lattice is contained in $H^*(X,\mathbb{Z})$. For Enriques surface, since $\sqrt{\textup{td}(X)}=(1,0,\frac{1}{2})$, we have to allow rational coefficients in $H^4_{\textup{alg}}(X,\Z)$.

We first review briefly moduli spaces of stable sheaves on K3 surfaces. A Mukai vector $v=(r, c_1, s)$ is \textit{positive} if $r>0$, or $r=0$, $c_1$ is effective and $s\neq 0$, or $r=c_1=0$ and $s>0$.
\begin{thm}[Mukai, Huybrechts, O'Grady, Yoshioka]
	If $v$ is positive and primitive with $v^2>0$ and $H$ is generic, the moduli space $M_H(v)$ of stable sheaves with Mukai vector $v$ is a projective hyperkähler variety of dimension $2n = v^2+2$ deformation equivalent to $\textup{Hilb}^n(X)$. 
\end{thm}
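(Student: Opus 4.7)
The plan is to establish the four assertions of the theorem—projectivity, smoothness of the expected dimension, existence of a holomorphic symplectic form, and deformation equivalence to $\Hilb^n(X)$—in increasing order of difficulty, each a well-known step but packaged together for the reader's convenience. First I would invoke the Gieseker--Maruyama construction: $M_H(v)$ is built as a projective GIT quotient of a Quot-scheme, so projectivity comes for free once $H$ is generic for $v$; by primitivity of $v$ the notions of $H$-stability and $H$-semistability coincide, so $M_H(v)$ parametrizes stable sheaves only, which simultaneously gives the correct scheme structure and avoids strictly semistable points.

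For smoothness at a stable point $[E]$, I would use the standard trace-free deformation theory: the Zariski tangent space is $\textup{Ext}^1(E,E)$ and the obstructions lie in the trace-free summand $\textup{Ext}^2_0(E,E)$. Since $E$ is stable hence simple, Serre duality on the K3 surface identifies $\textup{Ext}^2_0(E,E)$ with the dual of $\textup{Hom}_0(E,E)=0$, so all obstructions vanish. The dimension $2n=v^2+2$ then falls out of Riemann--Roch, which rewrites $\chi(E,E)=-v^2$ in terms of the Mukai pairing. For the symplectic form I would invoke Mukai's construction: the composition of Yoneda product, trace, and the canonical isomorphism $H^2(X,\mathcal{O}_X)\cong\mathbb{C}$ yields a non-degenerate alternating pairing
\[
\textup{Ext}^1(E,E)\otimes\textup{Ext}^1(E,E)\longrightarrow\textup{Ext}^2(E,E)\xrightarrow{\textup{tr}}\mathbb{C}
\]
which varies holomorphically in families and therefore globalises to a holomorphic symplectic $2$-form on $M_H(v)$.

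The main obstacle, and the deepest part of the statement, is the deformation equivalence to $\Hilb^n(X)$, which is the combined contribution of O'Grady, Huybrechts, and Yoshioka. The strategy I would follow is two-fold. First, working in a relative setting over a connected family of polarized K3 surfaces and Mukai vectors, one shows that the diffeomorphism type of $M_H(v)$ depends only on $v^2$; this uses smoothness and projectivity in families together with Ehresmann's theorem away from walls. Second, one reduces to the tautological case $v=(1,0,1-n)$, whose moduli space is evidently $\Hilb^n(X)$. The reduction exploits Fourier--Mukai equivalences between $\Db(X)$ and $\Db(X')$ for suitable partners $X'$: by the transitivity of the action of the derived autoequivalence group on primitive algebraic Mukai vectors of fixed square, any such $v$ can be transported into the Hilbert-scheme vector, and the induced equivalence carries $M_H(v)$ isomorphically onto $M_{H'}(1,0,1-n)$ for a suitable polarization. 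Establishing this transitivity and checking that the Fourier--Mukai transform preserves stability of generic objects is the technical heart of the argument.
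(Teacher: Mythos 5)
This theorem appears in the paper only as a cited background result, attributed to Mukai, Huybrechts, O'Grady and Yoshioka, with no proof supplied; there is therefore no argument of the paper to compare yours against, and the right benchmark is the classical literature. Your outline of the first three assertions is the standard and correct one: projectivity via the Gieseker--Maruyama GIT construction (with genericity of $H$ and primitivity of $v$ ensuring stability coincides with semistability), unobstructedness from $\textup{Ext}^2_0(E,E)\cong \textup{Hom}_0(E,E)^*=0$ by Serre duality on a $K$-trivial surface, the dimension count $\textup{ext}^1(E,E)=v^2+2$ from Riemann--Roch, and Mukai's symplectic form from the Yoneda pairing composed with the trace.

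The deformation-equivalence step is the right strategy in outline but glosses over the two points that constitute the actual content. First, ``deform in families and apply Ehresmann away from walls'' is not enough: keeping $v$ algebraic confines you to a Noether--Lefschetz locus, and connecting two polarized K3s within it generally forces you to cross walls in the ample cone, across which $M_H(v)$ changes by a birational modification; this is precisely where Huybrechts' contribution enters (birational hyperk\"ahler manifolds are deformation equivalent, resp.\ his degeneration argument). Second, the ``transitivity of the derived autoequivalence group on primitive Mukai vectors of fixed square'' does not hold on a fixed K3 surface of small Picard rank; Yoshioka's argument instead deforms to elliptic K3 surfaces and uses relative Fourier--Mukai transforms along the fibration to lower the rank to the case $v=(1,0,1-n)$, checking preservation of stability for generic sheaves there. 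As a sketch of a known theorem these omissions are forgivable, but as written the last paragraph would not compile into a proof without importing both of these inputs explicitly.
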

Bridgeland constructed certain geometric stability conditions on K3 surfaces and showed that they all lie in a distinguished component $\St(X)$ of the stability manifold \cite{BridgelandK3}. For stability conditions in $\St(X)$, Bayer and Macrì showed that the above theorem also holds true for moduli spaces of stable complexes \cite{BayMacPrBiGeo}. Another key result by the two authors, which we need in our investigation of the birational type of moduli spaces, is the following, cf.\ \cite{BayMacMMP}. 

\begin{thm}
	\label{thm:BM_MMP}
	Let $X$ be a K3 surface, $v$ a primitive Mukai vector and $\sigma$ and $\tau$ generic stability conditions in $\St(X)$. Assume $v^2>0$. Then, the moduli spaces $M_{\sigma}(v)$ and $M_{\tau}(v)$ are birational. 
	
	Moreover, every smooth $K$-trivial birational model of $M_{\sigma}(v)$ appears as a moduli space $M_{\tau}(v)$ of Bridgeland stable objects for some generic $\tau \in \St(X)$. 
\end{thm}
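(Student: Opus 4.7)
The plan is to run the Bayer--Macrì wall-crossing machinery for Bridgeland stability on $\Db(X)$. The two essential inputs are (i) the global description of $\St(X)$ as a complex manifold with a locally finite wall-and-chamber decomposition with respect to $v$, and (ii) the positivity construction, which for each generic $\sigma$ produces via a Mukai-type morphism a nef class $\ell_\sigma \in \NS(M_\sigma(v))_\R$ together with a natural map $\ell\colon \St(X) \to \NS(M_\sigma(v))_\R$.

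For the birationality part I would first reduce to the single-wall case: any two generic $\sigma,\tau\in\St(X)$ are joined by a generic path meeting only finitely many walls, so it suffices to treat $\sigma_+$ and $\sigma_-$ in adjacent chambers separated by one wall $W$, with a generic $\sigma_0\in W$. I would then show that the locus $U_\pm\subset M_{\sigma_\pm}(v)$ of objects that remain $\sigma_0$-\emph{stable} is open and dense, and that $U_+$ and $U_-$ parametrize the \emph{same} set of objects, giving an isomorphism $U_+\cong U_-$ that extends to the desired birational map. Density is checked by a dimension count: any strictly $\sigma_0$-semistable object admits a Jordan--Hölder filtration whose factors span a sublattice of $H^*_{\textup{alg}}(X,\Z)$ of Mukai rank two, and primitivity of $v$ forces such loci to have codimension $\ge 1$.

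For the second statement I would invoke the global Torelli theorem for hyperkähler manifolds of $K3^{[n]}$-type (Verbitsky, Markman), which identifies smooth $K$-trivial birational models of $M_\sigma(v)$ with the chambers of the movable cone inside $\NS(M_\sigma(v))_\R$. One then wants to show that the image of $\ell$, together with the action of $\textup{Aut}(\Db(X))$ by pullback on $\St(X)$, covers the full movable cone. This is done by matching walls on both sides: each wall $W\subset\St(X)$ is classified (flopping, Hilbert--Chow, Li--Gieseker--Uhlenbeck or Brill--Noether type) via the rank-two Mukai sublattice $H_W$ it defines, and Markman's monodromy/lattice description of the movable cone sees the same wall on the geometric side through the behavior of $\ell_{\sigma_0}$ on the wall. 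Matching chambers then gives, for each birational model $M$, a chamber $\mathcal{C}$ and generic $\tau\in\mathcal{C}$ with $M\cong M_\tau(v)$.

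The main obstacle is the wall-crossing analysis in the first step, specifically the \emph{totally semistable} walls where every $\sigma_0$-semistable object with class $v$ is strictly $\sigma_0$-semistable. For such walls the naive comparison of $U_\pm$ fails, and one must first apply an autoequivalence built from spherical twists along $(-2)$-classes in $H_W$ to reduce to the generically stable case. Producing the correct autoequivalence in each combinatorial type of totally semistable wall, and checking that the resulting identification is genuinely birational (and not merely a rational correspondence of lower dimension), is the heart of the argument.
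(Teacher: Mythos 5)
This theorem is not proven in the paper at all: it is quoted as background, being Theorems 1.1 and 1.2 of Bayer and Macr\`i \cite{BayMacMMP}, so there is no internal proof to compare your attempt against. That said, your sketch is a faithful reconstruction of the strategy of the cited proof: the reduction to a single wall, the identification of the common open locus of objects that remain $\sigma_0$-stable (dense for non-totally-semistable walls by a dimension count on the Jordan--H\"older strata governed by the rank-two hyperbolic lattice $H_W$), the correction by compositions of spherical twists along $\sigma_0$-stable spherical objects at totally semistable walls, and, for the second assertion, the computation of the image of $\ell\colon \St(X)\to \NS(M_\sigma(v))$ as the intersection of the big and movable cones, matched against the decomposition of the movable cone into nef cones of birational models provided by Verbitsky--Markman Torelli. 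You correctly locate the real content in the classification of totally semistable walls; be aware that ``primitivity of $v$ forces codimension $\ge 1$'' is not literally true (totally semistable walls do occur for primitive $v$), so the dimension count only applies after the spherical-twist reduction, and that the wall-matching in the second part also requires the analysis of which walls of $\St(X)$ induce divisorial versus small contractions via $\ell_{\sigma_0}$. None of this affects the present paper's logic, which takes the theorem as an input and only uses the finer structure of the birational maps (their description via spherical twists and dualities) in the proof of Theorem \ref{thm:mod_enr_wall_cross}.
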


Now we pass to Enriques surfaces $Y$. Since their canonical divisor $\omega_Y$ is 2-torsion, the moduli space for the Mukai vector $v$ decomposes into
\begin{align*}
M_{\sigma}^Y(v) \cong  M_{\sigma}^Y(v,L) \sqcup  M_{\sigma}^Y(v, L\otimes \omega_Y),
\end{align*}
where we furthermore fix the determinant line bundle $L$, respectively $L \otimes \omega_Y$. If the rank of the Mukai vector is odd, the two components are isomorphic. 

Kim \cite{KimEn, Kimr2} was the first one to study moduli spaces of stable sheaves on Enriques surfaces.
\begin{prop}[Kim]
	Given an Enriques surfaces $Y$ with its universal cover $\pi \colon \widetilde{Y} \rightarrow Y$, the morphism
	\begin{align*}
	\pi^* \colon M_H^Y(v) \rightarrow M_{\pi^*H}^{\widetilde{Y}}(\pi^*(v))
	\end{align*}
	has degree 2 and it is étale onto its image away from all points $[E]$ satisfying $E \cong E \otimes \omega_Y$. Its image is the fixed locus of the action given by the covering involution $i^* \in \textup{Aut}(M_{\pi^*H}^{\widetilde{Y}}(\pi^*(v)))$ and is a Lagrangian subvariety. 
\end{prop}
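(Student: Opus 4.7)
The plan is to exploit the adjunction $\pi^* \dashv \pi_*$ together with the identity $\pi_*\mathcal{O}_{\widetilde{Y}} \cong \mathcal{O}_Y \oplus \omega_Y$, which via the projection formula yields the fundamental relation
\[
\pi_*\pi^*E \cong E \oplus (E \otimes \omega_Y)
\]
for every $E \in \Db(Y)$. First I would verify that $\pi^*$ defines a morphism of moduli spaces: a standard Galois-descent argument for finite étale covers shows that the pullback of an $H$-stable sheaf on $Y$ is $\pi^*H$-polystable, and is itself stable precisely when $E \not\cong E \otimes \omega_Y$ (otherwise $\pi^*E$ decomposes as $F \oplus i^*F$ for some stable $F$ on $\widetilde{Y}$ swapped by the involution).

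Since $\pi \circ i = \pi$, there is a canonical isomorphism $i^*\pi^*E \cong \pi^*E$, so the image of $\pi^*$ is contained in the fixed locus $\textup{Fix}(i^*)$. To compute the fibers, suppose $\pi^*F \cong \pi^*E$; applying $\pi_*$ and invoking Krull--Schmidt forces $F \in \{E, E \otimes \omega_Y\}$, so the fiber has cardinality $2$ whenever $E \not\cong E \otimes \omega_Y$. For the converse direction (image equals the entire fixed locus), given a stable $F$ on $\widetilde{Y}$ with $i^*F \cong F$, one analyses $\pi_*F$: the relation $\pi^*\pi_*F \cong F \oplus i^*F \cong F^{\oplus 2}$ combined with stability forces either $F$ to be the pullback of a stable summand of $\pi_*F$, or $\pi_*F$ itself to be stable on $Y$ satisfying $\pi_*F \cong \pi_*F \otimes \omega_Y$; either case produces the required preimage.

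For étaleness at a point $[E]$ with $E \not\cong E \otimes \omega_Y$ I would use deformation theory. The tangent spaces are $\textup{Ext}^1_Y(E,E)_0$ and $\textup{Ext}^1_{\widetilde{Y}}(\pi^*E, \pi^*E)_0$, and by adjunction the latter decomposes canonically as $\textup{Ext}^1_Y(E, E) \oplus \textup{Ext}^1_Y(E, E \otimes \omega_Y)$, with the differential of $\pi^*$ identified with the inclusion of the first summand. Injectivity of this inclusion shows $\pi^*$ is unramified at $[E]$, and combined with the fiber count above one obtains étaleness onto the image away from the locus where $E \cong E \otimes \omega_Y$.

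Finally, for the Lagrangian statement: since $(\pi^*v, \pi^*v)_{\widetilde{Y}} = 2(v,v)_Y$, the dimension identity $\dim M_{\pi^*H}^{\widetilde{Y}}(\pi^*v) = 2v^2 + 2 = 2 \dim M_H^Y(v)$ shows the image has the right dimension to be Lagrangian. For isotropy I would use that the covering involution $i$ acts by $-1$ on the holomorphic symplectic $2$-form on $\widetilde{Y}$ (this is the defining property of the K3 double cover of an Enriques surface). Since the symplectic form on $M^{\widetilde{Y}}(\pi^*v)$ is built from $\sigma_{\widetilde{Y}}$ via a Yoneda-trace pairing on $\textup{Ext}^1$'s, $i^*$ likewise acts on it by $-1$; the form then vanishes identically on the fixed locus by the standard argument $\iota^* \sigma_M = \iota^*(i^*\sigma_M) = -\iota^*\sigma_M$. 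The step I expect to be most delicate is the descent direction in the second paragraph, where sorting out a possible sign/Brauer obstruction to descending an $i^*$-fixed stable sheaf to an honest sheaf on $Y$ requires a careful case analysis; this is the substantive input that Kim's argument supplies.
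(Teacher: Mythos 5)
The paper does not actually prove this statement: it appears in the review section as a proposition attributed to Kim, with the proof delegated entirely to the cited papers of Kim (and, for the Bridgeland analogue used later, to Nuer). So there is nothing on the paper's side to compare against. Your reconstruction follows what is essentially the standard line of argument --- the projection formula $\pi_*\pi^*E\cong E\oplus(E\otimes\omega_Y)$ and Krull--Schmidt for the fiber count, the adjunction splitting $\textup{Ext}^1_{\widetilde{Y}}(\pi^*E,\pi^*E)\cong\textup{Ext}^1_Y(E,E)\oplus\textup{Ext}^1_Y(E,E\otimes\omega_Y)$ for \'etaleness, the identity $(\pi^*v)^2=2v^2$ for the dimension count, and the fact that $i$ acts by $-1$ on the symplectic form (hence on the Mukai form of the moduli space) for isotropy of the fixed locus --- and it correctly assembles all of these ingredients.

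The one step that is wrong as written is the surjectivity onto the fixed locus. Your second alternative, ``$\pi_*F$ itself stable on $Y$,'' cannot ``produce the required preimage'': by Grothendieck--Riemann--Roch $v(\pi_*F)=\pi_*v(F)=2v$, so $\pi_*F$ is never a point of $M_H^Y(v)$; that case must be excluded, not used. The clean way to close the step is not through $\pi_*F$ but through descent: since $F$ is stable, hence simple, any isomorphism $\lambda\colon i^*F\to F$ satisfies $\lambda\circ i^*\lambda=c\cdot\textup{id}_F$ for a scalar $c$, and rescaling $\lambda$ by $t$ replaces $c$ by $t^2c$, so over $\mathbb{C}$ one can normalize $c=1$. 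Thus $F$ carries an $\langle i\rangle$-linearization and descends to a sheaf $E$ on $Y$ with $\pi^*E\cong F$, stable because its pullback is; the two normalizations $\pm\lambda$ give exactly the two descents $E$ and $E\otimes\omega_Y$, which is also the cleanest way to see that the degree is $2$. The ``sign/Brauer obstruction'' you flag as the delicate point vanishes here precisely because $H^2(\mathbb{Z}/2,\mathbb{C}^*)=0$.
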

Successive results by Yoshioka \cite{yoshiokatw}, Hauzer \cite{hauzer} and finally by Nuer \cite{Nuerno, Nuerpro} have lead to a complete understanding of when the moduli spaces are non-empty. We only need
\begin{thm}
	Let $Y$ be an Enriques surface, $v \in H^*_{\textup{alg}}(Y, \mathbb{Z})$ a Mukai vector such that $\pi^*(v) \in H^*_{\textup{alg}}(\widetilde{Y}, \mathbb{Z})$ is primitive and $H$ a generic polarization. Then $M_H(v,L)$ is a non-empty smooth projective variety with torsion canonical bundle provided $v^2 \ge -1$. 
\end{thm}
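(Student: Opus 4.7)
The plan is to pull everything back to the covering K3 surface $\widetilde{Y}$ and reduce each of the four claims — non-emptiness, smoothness, projectivity, and torsion of the canonical bundle — to the already-quoted results for K3 moduli spaces plus an analysis of the covering involution. Since $\pi^*(v)$ is primitive and $(\pi^*v)^2 = 2v^2 \ge -2$, the Mukai--Huybrechts--O'Grady--Yoshioka theorem recalled above guarantees that the K3 moduli space $M_{\pi^*H}^{\widetilde{Y}}(\pi^*v)$ is a non-empty smooth projective variety. The covering involution $i\colon \widetilde{Y}\to\widetilde{Y}$ acts as $-1$ on $H^{2,0}(\widetilde{Y})$, so the induced automorphism $i^*\in\mathrm{Aut}(M^{\widetilde{Y}})$ is antisymplectic, and its fixed locus is a smooth (possibly empty) closed subvariety.

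I would then invoke Kim's proposition to identify $M_H^Y(v,L)$ with a union of connected components of $\mathrm{Fix}(i^*)$ via $\pi^*$. Smoothness follows from Cartan's linearization for fixed loci of finite-order automorphisms of smooth varieties, and projectivity from closedness inside $M^{\widetilde{Y}}$. Primitivity of $\pi^*(v)$ implies that no strictly semistable K3 objects occur, so the map $\pi^*$ is étale onto its image and stability is automatic for every object in $M_H^Y(v,L)$; hence the expected dimension $v^2 + 1$ is realized and the scheme structure is the right one.

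The two substantive inputs are \emph{non-emptiness} and \emph{torsion canonical}. For non-emptiness — the main obstacle — the fact that $M^{\widetilde{Y}}$ is non-empty is not enough, since one still needs to show that $\mathrm{Fix}(i^*)$ meets the relevant component. I would invoke the inductive existence results of Yoshioka, Hauzer and Nuer (in particular \cite{Nuerpro} for the Bridgeland version): starting from rank-one cases and performing line-bundle twists, spherical reflections, and elementary modifications, one exhibits an actual stable object on $Y$ with Mukai vector $v$ for every $v$ with $v^2 \ge -1$. The bound $v^2 \ge -1$ is sharp because $v^2 = -2$ would require a spherical object on $Y$, which generically does not exist. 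For the torsion canonical, I would use Mukai's deformation-theoretic argument: the trace-free Ext pairing on $Y$ is symmetric up to twist by $\omega_Y$, and since $\omega_Y^{\otimes 2} \cong \mathcal{O}_Y$ this descends to a global $2$-form on $M_H^Y(v,L)$ whose top wedge trivializes $K^{\otimes 2}$. Equivalently, the Lagrangian arising as the fixed locus of an antisymplectic involution of a hyperkähler manifold inherits a torsion canonical class from the restricted symplectic form.
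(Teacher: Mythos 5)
A preliminary remark: the paper does not prove this statement at all. It is quoted as a known result, attributed to Kim (for the structure of $\pi^*$) and to Yoshioka, Hauzer and Nuer (for non-emptiness and the canonical bundle), so there is no internal proof to compare yours against. Your overall strategy — pass to the covering K3 surface, realize $M_H(v,L)$ as a double cover of the fixed locus of the antisymplectic involution $i^*$, and defer non-emptiness to the inductive constructions of Yoshioka--Hauzer--Nuer — is precisely the strategy of those references. Your smoothness argument is essentially right once one corrects the phrasing: $\pi^*$ is $2{:}1$ onto $\mathrm{Fix}(i^*)$, not an isomorphism onto a union of its components, so Cartan linearization alone does not suffice; what saves it is that primitivity of $\pi^*(v)$ and genericity of $H$ exclude stable $E$ with $E\cong E\otimes\omega_Y$ (such an $E$ has non-simple, hence strictly semistable, pullback), whence $\pi^*$ is étale onto the smooth fixed locus. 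Two smaller caveats: the quoted K3 theorem assumes $v^2>0$, so the boundary cases $(\pi^*v)^2\in\{-2,0\}$, i.e.\ $v^2\in\{-1,0\}$, need separate (standard) treatment; and for non-emptiness your proof is, as you acknowledge, a citation rather than an argument — which is all the paper offers as well.

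The step that is actually wrong is your justification of the torsion canonical bundle. The claim that the Lagrangian fixed locus of an antisymplectic involution of a hyperkähler manifold ``inherits a torsion canonical class from the restricted symplectic form'' is false: for a K3 surface that is a double cover of $\mathbb{P}^2$ branched along a sextic, the covering involution is antisymplectic and its fixed locus is a Lagrangian curve of genus $10$. Lagrangianity only gives $N_{Z/X}\cong\Omega_Z$ and hence $K_X|_Z\cong\mathcal{O}_Z$, which is vacuous here since $K_{M^{\widetilde{Y}}}$ is already trivial; it says nothing about $K_Z$. Your first mechanism also does not close: Serre duality on $Y$ identifies $\mathrm{Ext}^1(E,E)$ with $\mathrm{Ext}^1(E,E\otimes\omega_Y)^*$, and since $E\not\cong E\otimes\omega_Y$ on the smooth locus this is \emph{not} a pairing of the tangent space $\mathrm{Ext}^1(E,E)$ with itself, so no $2$-form on $M_H(v,L)$ is produced — consistently with the paper's own corollary that in the even rank case these moduli spaces are Calabi--Yau with $h^{2,0}=0$. (Likewise the Yoneda square lands in $\mathrm{Ext}^2(E,E)$, which vanishes for such $E$.) The $K$-triviality is a genuine theorem of Kim, Saccà and Nuer, proved by a direct determinant-of-cohomology computation on $Y$, and at this point your argument needs to cite it rather than rederive it by this route.
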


The category of coherent sheaves Coh$(Y)$ on an Enriques surface $Y$ is naturally isomorphic to the category of coherent $\langle i^* \rangle$-sheaves Coh$_{i^*}(\widetilde{Y})$. This yields a natural equivalence between the bounded derived categories $\Db(Y)$ and $\Dbi(\widetilde{Y})$. In \cite{MMSISC}, the authors described stability conditions on Enriques surfaces using $i^*$-invariant stability conditions on $\St(\widetilde{Y})$ via the functors $\pi^*$ and $\pi_*$.
\begin{thm}[Macrì, Mehrotra, Stellari]
	The distinguished connected component $\St(Y)$ of the stability manifold of an Enriques surface embeds into the corresponding component of its universal cover $\St(\widetilde{Y})$. If $Y$ is generic, the components coincide. 
\end{thm}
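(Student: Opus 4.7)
The plan is to realize $\St(Y)$ as the $i^*$-invariant locus inside $\St(\widetilde{Y})$ and then, for generic $Y$, to show that this locus exhausts the distinguished component. For an $i^*$-invariant stability condition $\widetilde{\sigma}=(\widetilde{Z},\widetilde{\mathcal{P}})\in \St(\widetilde{Y})$, I would define $\sigma=(Z,\mathcal{P})$ on $\Db(Y)\simeq \Dbi(\widetilde{Y})$ by restriction, setting $\mathcal{P}(\phi)\coloneqq \widetilde{\mathcal{P}}(\phi)\cap \Dbi(\widetilde{Y})$ and $Z(E)\coloneqq \tfrac{1}{2}\widetilde{Z}(\pi^*E)$. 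The subtle axiom is the existence of Harder--Narasimhan filtrations: starting from the HN filtration of $\pi^*E$ on $\widetilde{Y}$, one uses uniqueness together with $i^*$-invariance of $\widetilde{\sigma}$ to force each HN factor to be $i^*$-invariant, so that the filtration descends to $\Dbi(\widetilde{Y})$. The support property transfers along $\pi^*$, and continuity in the Bridgeland metric is immediate.

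The inverse construction takes $\sigma\in \St(Y)$ and extends it to $\widetilde{Y}$ by pullback. The point requiring care is that $\pi^*$ preserves semistability: given a $\sigma$-stable $E\in\Db(Y)$, the identity $\pi_*\pi^*E \cong E \oplus (E\otimes \omega_Y)$ together with the stability of both summands allows the transfer of HN filtrations back and forth. Both constructions are continuous, and one verifies they are mutually inverse, giving a homeomorphism between $\St(Y)$ and the $i^*$-invariant locus inside the relevant components of $\St(\widetilde{Y})$. This already yields the embedding claim.

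For the generic assertion I would exploit the definition $\rho(\widetilde{Y})=10$. Because $\pi^*\NS(Y)\otimes\Q$ already has rank $10$ and $\pi\circ i=\pi$, the involution $i^*$ acts trivially on $\NS(\widetilde{Y})$, and therefore on the entire algebraic Mukai lattice $H^*_{\textup{alg}}(\widetilde{Y},\Z)$. Consequently the central charge projection $\mathcal{Z}\colon \St(\widetilde{Y})\to \textup{Hom}(H^*_{\textup{alg}}(\widetilde{Y},\Z),\mathbb{C})$ intertwines $i^*$ with the identity on the target. Since $\mathcal{Z}$ is a local homeomorphism by Bridgeland's theorem, the $i^*$-fixed locus is both open and closed in the distinguished component. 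It is moreover nonempty, as it contains the Gieseker-type stability condition attached to a $\pi^*$-ample polarization, which is manifestly $i^*$-invariant. By connectedness, $i^*$ acts trivially on the full distinguished component, which therefore coincides with $\St(Y)$. The hardest step is this global propagation of invariance from a single fixed point, but it is precisely the local homeomorphism property of $\mathcal{Z}$ that makes it automatic.
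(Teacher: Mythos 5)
This is a quoted result of Macrì--Mehrotra--Stellari; the paper gives no proof and simply cites \cite{MMSISC}, so there is no internal argument to compare against. Your sketch is essentially a correct reconstruction of the original proof: descent of an $i^*$-invariant stability condition to $\Db(Y)\simeq\Dbi(\widetilde{Y})$ with the HN axiom handled via uniqueness of HN filtrations, the inverse via $\pi^*$/$\pi_*$, and, in the generic case, triviality of the $i^*$-action on $\NS(\widetilde{Y})$ (hence on central charges) combined with the local homeomorphism $\mathcal{Z}$ and a Gieseker-type fixed point to propagate invariance over the whole connected component. Two small points deserve sharper wording: $\Dbi(\widetilde{Y})$ is not a subcategory of $\Db(\widetilde{Y})$, so $\mathcal{P}(\phi)$ should be defined as the preimage of $\widetilde{\mathcal{P}}(\phi)$ under the forgetful functor rather than an intersection; and for the HN factors it is not enough that they be $i^*$-invariant as objects --- one must check that the equivariant structure on $\pi^*E$ induces compatible equivariant structures on the filtration steps (which does follow from uniqueness of the HN filtration, as you indicate).
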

For a generic Enriques surface $Y$ we denote by $\widetilde{\sigma} \in \St(\widetilde{Y})$ the stability condition corresponding to $\sigma \in \St(Y)$ , i.e.\ $(\pi^*)^{-1}(\widetilde{\sigma})=\sigma$.

Nuer \cite{Nuerpro} established the existence of projective coarse moduli spaces for Bridgeland stability conditions on Enriques surfaces. For primitive Mukai vector these are also smooth projective $K$-trivial varieties and the morphism
\begin{align*}
\pi^* \colon M_{\sigma}^Y(v) \rightarrow M_{\widetilde{\sigma}}^{\widetilde{Y}}(\pi^*(v))
\end{align*}
is 2:1 onto the fixed locus of the action given by the covering involution.
	\section{Moduli Spaces of stable objects on Enriques Surfaces}
\label{sec:gmes}

We first observe that the image of the moduli space of stable objects on an Enriques surface is a constant cycle Lagrangian. Shen, Yin, and Zhao \cite{ShenYinZhao} studied the group of zero-cycles on moduli spaces of stable objects on K3 surfaces. They formulated a conjecture, which was later proven by the third author and Marian \cite{MarianZhao}.
\begin{thm}[Marian, Shen, Yin, Zhao]
	Let $X$ be a K3 surface and $v\in H^*_{\text{alg}}(X,\Z)$ a primitive Mukai vector. For generic $\sigma \in \St(X)$ consider the moduli space $M_{\sigma}(v)$ of stable complexes with Mukai vector $v$. For $E,F \in M_\sigma(v)$ we have
	\begin{align*}
	[E]=[F] \textup{ in CH}_0(M_\sigma(v)) \iff c_2(E) = c_2(F) \textup{ in CH}_0(X). 
	\end{align*}
\end{thm}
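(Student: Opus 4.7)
The plan is to prove the two implications separately: the forward direction is a formal consequence of a universal family, while the reverse direction reduces, via wall-crossing and Fourier--Mukai transforms, to the case of the Hilbert scheme.

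For the forward direction $[E]=[F] \Rightarrow c_2(E) = c_2(F)$, I would fix a (quasi-)universal family $\mathcal{E}$ on $M_\sigma(v) \times X$, whose existence is ensured by primitivity of $v$ and genericity of $\sigma$. The class $c_2(\mathcal{E}) \in \CH^2(M_\sigma(v) \times X)$ defines a correspondence whose restriction to $\{[E]\} \times X$ recovers $c_2(E) \in \CH_0(X)$. Since correspondences respect rational equivalence of 0-cycles, the implication is immediate.

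For the reverse direction $c_2(E) = c_2(F) \Rightarrow [E]=[F]$, I would reduce to the Hilbert scheme case in two steps. First, by Theorem \ref{thm:BM_MMP}, the moduli space $M_\sigma(v)$ is birational to $M_\tau(v)$ for any other generic $\tau$; since $\CH_0$ is a birational invariant of smooth projective varieties (and Rieß more sharply gives a canonical isomorphism of Chow rings for birational hyperkähler manifolds), the validity of the statement is stable under wall-crossing. Second, applying a suitable Fourier--Mukai equivalence $\Phi \colon \Db(X) \xrightarrow{\sim} \Db(X')$ to a Fourier--Mukai partner $X'$, one may arrange $\Phi_*(v) = (1,0,1-n)$, so that the transported moduli space becomes $\Hilb^n(X')$. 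The statement on $\Hilb^n(X')$ reads: $[\xi_1] = [\xi_2]$ in $\CH_0(\Hilb^n(X'))$ iff the underlying 0-cycles $\xi_1, \xi_2$ are rationally equivalent in $\CH_0(X')$, where we use $c_2(I_\xi) = [\xi]$.

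The principal obstacle is the Hilbert scheme case itself, which amounts to describing $\CH_0(\Hilb^n(X'))$ in terms of $\CH_0(X')$: one must show that the class in the Hilbert scheme is governed exactly by the underlying 0-cycle. This I would attack via the Hilbert--Chow morphism $\Hilb^n(X') \to \mathrm{Sym}^n(X')$ (a birational resolution, inducing an isomorphism on $\CH_0$) together with a careful analysis of $\CH_0(\mathrm{Sym}^n(X'))$, ultimately invoking Voisin's work on Chow groups of Hilbert schemes of K3 surfaces. The reductions via FM and birational equivalence are comparatively formal, though one must verify that the $c_2$-invariant intertwines properly -- in particular, that the map on the degree-zero parts of $\CH_0$ induced by $\Phi$ is the correct one to transport the condition $c_2(E) = c_2(F)$ across the equivalence, modulo the Beauville--Voisin point class. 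Making this bookkeeping canonical is the main technical work.
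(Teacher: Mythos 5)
The paper offers no proof of this theorem: it is imported wholesale from the literature (conjectured by Shen--Yin--Zhao, proved by Marian--Zhao), so there is nothing internal to compare your argument against. On its own terms, your outline does track the shape of the actual proof: the forward implication via the (quasi-)universal family viewed as a correspondence, and the reverse implication by transporting the statement through Fourier--Mukai equivalences and the Bayer--Macr\`i birational maps until the Mukai vector becomes $(1,0,1-n)$, reducing to the Hilbert scheme.

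Two genuine gaps remain, however. The first is the Hilbert scheme case itself, which you rightly flag as the main obstacle but then propose to settle by passing to $\mathrm{Sym}^n(X')$; this does not work as stated. The Hilbert--Chow morphism gives $\CH_0(\Hilb^n(X'))\cong\CH_0(\mathrm{Sym}^n(X'))$, but the latter is (rationally) $\mathrm{Sym}^n(\CH_0(X')_{\Q})$, which is far larger than $\CH_0(X')$, and for a K3 surface there is no sum map $\mathrm{Sym}^n(X')\to X'$ that would compare the two. The assertion that $[\xi_1]=[\xi_2]$ in $\CH_0(\Hilb^n(X'))$ whenever the underlying cycles agree in $\CH_0(X')$ is exactly the hard content; it requires O'Grady's filtration $S_\bullet\CH_0(X')$ together with his effectivity result (any degree-$n$ effective cycle whose class lies in $S_k$ is rationally equivalent to $z+(n-k)c_{X'}$ with $z$ effective of degree $k$ and $c_{X'}$ the Beauville--Voisin class) and a constant-cycle analysis of the resulting incidence loci, as carried out by Shen--Yin--Zhao; general facts about symmetric products will not produce it. The second gap is the transport across walls: birational invariance of $\CH_0$ identifies the groups, but for an object $E$ lying in the indeterminacy locus of $M_\sigma(v)\dashrightarrow M_\tau(v)$ you must still show that $[E]$ is identified with the class of an object having the same $c_2$ on the other side; this needs the fact that the contracted loci are covered by rational curves of $S$-equivalent (hence $\mathrm{ch}_2$-constant) objects. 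Finally, the ``bookkeeping'' you defer conceals a necessary input: Huybrechts' theorem that derived equivalences of K3 surfaces preserve the Beauville--Voisin class, without which the condition $c_2(E)=c_2(F)$ need not be carried to the corresponding condition on the Fourier--Mukai partner.
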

By \cite{BKS}, Enriques surfaces $Y$ satisfy Bloch's conjecture. Since their Albanese variety is trivial, we get $\textup{CH}_0(Y) = \Z$. Thus, we conclude the following for all Enriques surfaces $Y$
\begin{prop}
	\label{prop:Mod_enr_CCL}
	Assume that $\pi^*(v) \in H^*_{\text{alg}}(\widetilde{Y},\Z)$ is primitive and $\sigma \in \St(\widetilde{Y})$ is generic. Then, the image of $\pi^* \colon M^Y_{\sigma}(v) \rightarrow M^{\widetilde{Y}}_{\widetilde{\sigma}}(\pi^*(v))$ is a constant cycle Lagrangian. \qed
\end{prop}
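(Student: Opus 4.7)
The plan is to reduce the statement to the Marian--Zhao theorem on the K3 side, using that Enriques surfaces have $\textup{CH}_0$ as small as possible.

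First, the Lagrangian property is already established: by the Kim--Nuer proposition recalled in Section \ref{subsec:mod_K3_Enriq}, the image of $\pi^*$ coincides with the fixed locus of the covering involution $i^*$ acting on $M_{\widetilde{\sigma}}^{\widetilde{Y}}(\pi^*(v))$, and this fixed locus is a Lagrangian subvariety. So the only content left is to prove that the image is constant cycle as a subvariety of the ambient symplectic moduli space $M_{\widetilde{\sigma}}^{\widetilde{Y}}(\pi^*(v))$.

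For this, pick two points $[E],[F]$ in the image, write them as $E=\pi^*E_0$ and $F=\pi^*F_0$ for some $E_0,F_0\in M_{\sigma}^Y(v)$. Since $\pi^*(v)$ is primitive and $\widetilde{\sigma}$ is generic, the Marian--Shen--Yin--Zhao theorem recalled above applies to $M_{\widetilde{\sigma}}^{\widetilde{Y}}(\pi^*(v))$, so it suffices to show the equality
\begin{equation*}
c_2(E)=c_2(F)\ \text{ in }\ \textup{CH}_0(\widetilde{Y}).
\end{equation*}
Chern classes commute with pullback along $\pi$ (taken in the derived sense for complexes, but this is routine), so $c_2(E)=\pi^*c_2(E_0)$ and similarly for $F$. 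It therefore suffices to establish $c_2(E_0)=c_2(F_0)$ in $\textup{CH}_0(Y)$.

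Here the Enriques hypothesis enters: by \cite{BKS}, $Y$ satisfies Bloch's conjecture, and since $\textup{Alb}(Y)=0$ one has $\textup{CH}_0(Y)=\mathbb{Z}$ via the degree map. The classes $c_2(E_0)$ and $c_2(F_0)$ have the same degree because the numerical invariant is fixed by the Mukai vector $v$, so they are rationally equivalent on $Y$. Pulling back along $\pi$ yields $c_2(E)=c_2(F)$ in $\textup{CH}_0(\widetilde{Y})$, and Marian--Zhao concludes the argument. There is no real obstacle: the proof is essentially the combination of the three ingredients gathered in the previous section, once one checks the elementary compatibility of $c_2$ with the étale double cover $\pi$.
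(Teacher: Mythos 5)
Your proposal is correct and is exactly the argument the paper intends: the Lagrangian property comes from the Kim--Nuer description of the image as the fixed locus of $i^*$, and the constant cycle property follows by combining $\textup{CH}_0(Y)=\Z$ (Bloch's conjecture for Enriques surfaces via \cite{BKS}) with compatibility of $c_2$ under $\pi^*$ and the Marian--Shen--Yin--Zhao criterion on the covering K3. The paper leaves the proof implicit (the proposition is stated with an immediate \qed after assembling these three ingredients), and your write-up just makes that assembly explicit.
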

If $v=(r,c_1,\frac{s}{2}) \in H^*_{\text{alg}}(Y,\Z)$ is primitive and $r$ is even, the hypothesis of the proposition is fulfilled if and only if $2$ does not divide gcd$(r, c_1)$ \cite[Lem.\ 2.1]{Nuerno}. 

Observe that this argument does not solely work for Enriques surfaces. For example, one may take a K3 surface $X$ given as a 2:1 cover $X\rightarrow \P^2$. Similarly, one may consider K3 surfaces with a non-symplectic automorphism of finite order. The quotient also satisfies Bloch's conjecture.

The moduli space $M^Y_{\sigma}(v)$ itself is not always CH$_0$-trivial. Indeed, these moduli spaces have Kodaira dimension zero which implies that the one-dimensional moduli spaces are elliptic curves. 
However, as we will see later, moduli spaces parametrizing odd rank Mukai vectors are always $\CH_0$-trivial. 

\subsection{Wall-crossing for generic Enriques surfaces}
Recall that we have an action of Aut(D$^{\textup{b}}(\widetilde{Y}))$ on $\St(\widetilde{Y})$. By the work of Macrì, Mehrotra, and Stellari \cite{MMSISC} we know that the action of the covering involution $i^*$ is trivial on $\St(\widetilde{Y})$, since $Y$ is generic. This yields
\begin{lem}
	\label{cor:stab_i}
	The autoequivalence $i^* \in \AutD \widetilde{Y}))$ acts on the moduli space, i.e.\ for all $E \in M_{\widetilde{\sigma}}^{\widetilde{Y}}(\widetilde{v})$ we have $ i^*E \in M_{\widetilde{\sigma}}^{\widetilde{Y}}(\widetilde{v})$. In particular, if $S \in \textup{D}^{\textup{b}}(\widetilde{Y})$ is a spherical object, then $i^*S \cong S$. \qed
\end{lem}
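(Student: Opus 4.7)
The argument rests on the already-cited triviality of the action of $i^*$ on $\St(\widetilde{Y})$. Writing $\widetilde{\sigma}=(Z,\mathcal{P})$ and unpacking the action formula $\psi.(Z,\mathcal{P})=(Z\circ\psi_*,\psi(\mathcal{P}))$ at $\psi=i^*$, the equality $i^*.\widetilde{\sigma}=\widetilde{\sigma}$ is equivalent to the two identities $Z=Z\circ(i^*)_*$ on $K_{\textup{num}}(\widetilde{Y})$ and $i^*(\mathcal{P}(\phi))=\mathcal{P}(\phi)$ for every $\phi\in\R$.

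For the first assertion I take $E\in M^{\widetilde{Y}}_{\widetilde{\sigma}}(\widetilde{v})$ of phase $\phi$; by definition $E$ is a simple object of the abelian subcategory $\mathcal{P}(\phi)$. Since $i^*$ is an exact equivalence preserving $\mathcal{P}(\phi)$, it restricts to an autoequivalence of that heart and hence sends simples to simples, so $i^*E$ is $\widetilde{\sigma}$-stable of phase $\phi$. To verify $v(i^*E)=\widetilde{v}$, recall that the moduli spaces in question are indexed by $\widetilde{v}=\pi^*(v)$; the identity $\pi\circ i=\pi$ of morphisms $\widetilde{Y}\to Y$ yields $i^*\circ\pi^*=\pi^*$ on cohomology, and therefore $v(i^*E)=i^*v(E)=i^*\pi^*(v)=\pi^*(v)=\widetilde{v}$.

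For the spherical case, the same reasoning shows $i^*S$ is $\sigma$-stable of the same phase as $S$, and sphericality is preserved by every autoequivalence so $i^*S$ is itself spherical. Since $v(S)^2=-2$, the moduli space of $\sigma$-stable objects of class $v(S)$ is zero-dimensional, and a stable spherical object is determined up to isomorphism by its Mukai class; thus once we know $v(S)$ is $i^*$-invariant, $i^*S\cong S$ is immediate. The main obstacle is precisely establishing the $i^*$-invariance of $v(S)$: I would try to read it off the lattice structure of the Mukai lattice of $\widetilde{Y}$ under the Enriques covering involution, ruling out an anti-invariant $(-2)$-class supporting a stable object, or alternatively produce $i^*S\cong S$ directly by a Serre-duality-based Euler-characteristic computation exhibiting a nonzero morphism $S\to i^*S$, which, together with the equality of phases, would force an isomorphism by stability.
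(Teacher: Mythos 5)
Your treatment of the first assertion is complete and is exactly the paper's (unwritten) argument: the Macr\`i--Mehrotra--Stellari result gives $i^*.\widetilde{\sigma}=\widetilde{\sigma}$, so $i^*$ preserves the slicing and hence stability and phase, and $\pi\circ i=\pi$ gives $v(i^*E)=i^*\pi^*(v)=\pi^*(v)=\widetilde{v}$. The lemma carries a \qed{} precisely because the author regards this as immediate.

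For the second assertion, however, you have correctly isolated the missing ingredient --- the $i^*$-invariance of $v(S)$ --- but you do not establish it, and both of your proposed routes secretly presuppose it: the Euler-characteristic computation gives $\chi(S,i^*S)=-(v(S),i^*v(S))$, which is only guaranteed to be positive once $i^*v(S)=v(S)$, so it cannot be used to \emph{prove} that invariance. The gap is closed by the standing hypothesis of Section~\ref{sec:gmes} that $Y$ is generic: then $\NS(\widetilde{Y})=U(2)\oplus E_8(-2)$ coincides with the $i^*$-invariant part of $H^2(\widetilde{Y},\Z)$, while $H^0$ and $H^4$ are always fixed, so $i^*$ acts as the identity on all of $H^*_{\textup{alg}}(\widetilde{Y},\Z)$ and in particular fixes $v(S)$. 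With that in hand, your uniqueness argument for stable spherical objects of a given class and phase does finish the proof. Note that the genericity is genuinely needed: on a non-generic cover $\widetilde{Y}$ there can be a line bundle $L$ (spherical and stable for suitable $\sigma$) with $c_1(L)$ not $i^*$-invariant, and then $i^*L\not\cong L$. As written, your proposal proves the first claim but leaves the second one open.
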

The moduli space decomposes into $M_{\sigma}^Y(v) \cong M_{\sigma}^Y(v, L) \sqcup M_{\sigma}^Y(v, L \otimes \omega_Y)$ and we concentrate only on one component, omitting the determinant in the notation.

The above lemma enables us to show that spherical twists are equivariant functors with respect to the covering involution. Recall that a Fourier--Mukai functor $\Phi \in \textup{Aut}(\Db(\widetilde{Y}))$ is called $i^*$-equivariant, if
\[
i^* \circ \Phi \cong \Phi \circ i^*
\]
as functors. This is the case if and only if $(i\times i)^*(\mathcal{E}) \cong \mathcal{E}$, where $\mathcal{E}$ is the Fourier--Mukai kernel of $\Phi$ and $i\times i \colon \widetilde{Y} \times \widetilde{Y} \rightarrow \widetilde{Y} \times \widetilde{Y}$ is the natural map on the product. Compare the following also to \cite[Prop.\ 6.11]{Nuerpro}.
\begin{lem}
	\label{lem:Sph_Twi_equi}
	Let $S\in \Db(\widetilde{Y})$ be a spherical object and denote by $\mathcal{E}$ the kernel of the associated spherical twist. Then $(i\times i)^*\mathcal{E} \cong \mathcal{E}$. 
\end{lem}
\begin{proof}
	The sperical twist is the autoequivalence with Fourier--Mukai kernel $\mathcal{E}\in\Db(\widetilde{Y}\times \widetilde{Y})$ which is the cone
	\[
	S^{\vee} \boxtimes S \longrightarrow \mathcal{O}_{\Delta}\longrightarrow \mathcal{E} \longrightarrow S^{\vee} \boxtimes S [1]
	\]
	of the natural trace morphism \cite[Thm.\ 1.2]{SeidelThomas}. Thus, to show that $(i\times i)^*(\mathcal{E}) \cong \mathcal{E}$ it suffices to ensure that the objects $S^{\vee} \boxtimes S$ and $\mathcal{O}_{\Delta}$ as well as the trace morphism between them is invariant under the action of $(i\times i)^*$. Indeed, if the trace morphism is invariant, we can complete the distinguished triangle in the equivariant category $\Db_{(i \times i)^*}(\widetilde{Y}\times \widetilde{Y})$. 
	
	That $S^{\vee} \boxtimes S$ is invariant follows evidently from Lemma \ref{cor:stab_i} and $\mathcal{O}_{\Delta}$ is invariant under the action of $(f\times f)^*$ for every automorphism $f$ of $\widetilde{Y}$. For the trace map, one can either calculate the invariance explicitly or observe that the map is defined as the identity under the natural isomorphisms
	\[	
	\textup{Hom}_{\Db(\widetilde{Y} \times \widetilde{Y})}(S^{\vee}\boxtimes S, \mathcal{O}_{\Delta}) \cong \textup{Hom}_{\Db(\widetilde{Y})}(S^{\vee}\otimes S, \mathcal{O}_{\widetilde{Y}}) \cong \textup{Hom}_{\Db(\widetilde{Y})}(S,S). \qedhere
	\]
\end{proof}

Consider now two stability conditions $\sigma_{\scaleto{+}{4pt}}, \sigma_{\scaleto{-}{4pt}} \in \St(Y)$. Inside one chamber the moduli spaces $\MYp$ and $\MYm$ stay the same. Hence, we only need to study the relationship of these two moduli spaces for $\sigma_{\scaleto{+}{4pt}}$ and $\sigma_{\scaleto{-}{4pt}}$ in adjacent chambers. We can assume that the corresponding stability conditions $\widetilde{\sigma}_{\scaleto{+}{4pt}}$ and $\widetilde{\sigma}_{\scaleto{-}{4pt}}$ in $\St(\widetilde{Y})$ are also generic with respect to $\pi^*(v)$ and lie in adjacent chambers since $Y$ is generic. 

\begin{thm}
	\label{thm:mod_enr_wall_cross}
	Let $Y$ be a generic Enriques surface and $v \in H^*_{\textup{alg}}(Y,\Z)$ a Mukai vector such that $\pi^*(v) \in H^*_{\textup{alg}}(\widetilde{Y},\Z)$ is primitive. Then, for generic stability conditions $\sigma, \tau \in \St(Y)$ the moduli spaces $M_{\sigma}^Y(v)$ and $M_{\tau}^Y(v)$ are birationally equivalent.
\end{thm}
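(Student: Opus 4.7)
The plan is to descend the Bayer--Macrì birational transformation from the covering K3 to the Enriques moduli via the constant cycle Lagrangian structure established in Proposition \ref{prop:Mod_enr_CCL}. By local finiteness of the wall-and-chamber decomposition and the observation preceding the theorem, it suffices to treat two generic stability conditions $\sigma_{+}, \sigma_{-} \in \St(Y)$ lying in adjacent chambers. Since $Y$ is generic, the Macrì--Mehrotra--Stellari embedding identifies $\St(Y)$ with $\St(\widetilde{Y})$, so the lifts $\widetilde{\sigma}_{\pm}$ are again generic and in adjacent chambers for $\pi^{*}(v)$. Applying Theorem \ref{thm:BM_MMP} then produces a birational map
\[
\widetilde{f}\colon \MYtp \dashrightarrow \MYtm.
\]

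Set $Z_{+} := \pi^{*}(\MYp)$ and $Z_{-} := \pi^{*}(\MYm)$. By Proposition \ref{prop:Mod_enr_CCL}, both are constant cycle Lagrangians. Since $\MYp$ is smooth with torsion canonical bundle and $\pi^{*}$ is étale in codimension one onto $Z_{+}$, the Lagrangian $Z_{+}$ has Kodaira dimension $\textup{kod}(Z_{+}) \ge 0$; the same applies to $Z_{-}$.

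The key step is that $\widetilde{f}$ restricts to a dominant rational map $Z_{+} \dashrightarrow Z_{-}$. Were $Z_{+}$ contained in the exceptional locus of $\widetilde{f}$, general points of its image would be fundamental points of $\widetilde{f}^{-1}$, and Proposition \ref{prop:prop_cc_bir} would immediately yield a contradiction. Hence $\widetilde{f}$ is defined on a dense open subset of $Z_{+}$, and Lemma \ref{lem:preim_cc} applied to the graph projections ensures that the image $\widetilde{f}(Z_{+})$ is again a constant cycle Lagrangian of $\MYtm$ of the same dimension as $Z_{+}$.

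The main obstacle is then to identify $\widetilde{f}(Z_{+})$ with $Z_{-}$ and to lift the restriction back to the Enriques moduli. I would exploit $i^{*}$-equivariance: by Lemma \ref{cor:stab_i} the covering involution fixes the stability conditions $\widetilde{\sigma}_{\pm}$, so $\widetilde{f}$ intertwines the $i^{*}$-actions and $\widetilde{f}(Z_{+})$ lies in the $i^{*}$-fixed locus of $\MYtm$; by the Kim--Nuer description this fixed locus is set-theoretically $Z_{-}$, with matching of determinant components forced by continuity of the Mukai morphism across the wall. Finally, realising $\widetilde{f}$ as the wall-crossing Fourier--Mukai transform in $\AutD\widetilde{Y}))$, its $i^{*}$-equivariance allows it to descend to an autoequivalence of $\Dbi(\widetilde{Y}) \cong \Db(Y)$, and the induced birational map $\MYp \dashrightarrow \MYm$ is the desired lift of $Z_{+} \dashrightarrow Z_{-}$ through the $2{:}1$ cover $\pi^{*}$.
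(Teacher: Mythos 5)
Your overall architecture matches the paper's: reduce to adjacent chambers, invoke Theorem \ref{thm:BM_MMP} for the covering K3 surface, use the constant cycle Lagrangian property together with Proposition \ref{prop:prop_cc_bir} to see that $\pi^*(\MYp)$ meets the locus where the birational map is defined, and descend by $i^*$-equivariance. The genuine gap is at the step you dispose of in one line: the $i^*$-equivariance of $\widetilde{f}$. Lemma \ref{cor:stab_i} only tells you that $i^*$ acts on each of the moduli spaces $\MYtp$ and $\MYtm$ separately; it does \emph{not} tell you that the particular birational map produced by Bayer--Macr\`i intertwines these two actions. That map is not canonical --- it is constructed in \cite[Thm.\ 5.7]{BayMacMMP} by a case analysis depending on the type of the wall, and equivariance must be verified case by case. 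This verification is where the bulk of the paper's proof lies: for fake walls and flopping contractions the map is either the identity on a common open subset of codimension-two complement or a composition of spherical twists at stable spherical objects (which are $i^*$-invariant precisely by the second half of Lemma \ref{cor:stab_i}); for divisorial contractions one must treat the Brill--Noether, Hilbert--Chow and Li--Gieseker--Uhlenbeck cases, the latter two requiring an identification of $\Db(\widetilde{Y})$ with the derived category of a (possibly twisted) auxiliary moduli space $M$, a check that the induced action of $i$ on $M$ is compatible with the universal sheaf, and the observation that the resulting functor $(\_)^{\vee}[2]$, respectively $(\_)^{\vee}\otimes L[2]$ with $L$ an $i^*$-invariant line bundle, commutes with $i^*$. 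Without this analysis the sentence ``so $\widetilde{f}$ intertwines the $i^*$-actions'' is an assertion, not an argument.

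A secondary point: your descent through the $2{:}1$ cover only works as stated for odd rank, where, after fixing the determinant of one component, $\pi^*$ embeds $\MYp$ into $\MYtp$. For even rank the pullback is $2{:}1$ onto its image even on a single component, so the restriction of $\widetilde{f}$ to the images does not immediately give a map of the Enriques moduli spaces; the paper handles this by noting that the image still has Kodaira dimension $0$ (as $\pi^*$ is \'etale onto its image), so Proposition \ref{prop:prop_cc_bir} still applies, and then lifting the isomorphism on images to equivariant sheaves via \cite[Thm.\ 4.5]{BridgelandMaciociaFMforquotients}. Your proposal does not distinguish these cases.
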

\begin{proof}
	To ease notation, we prove the statement at first only for odd rank Mukai vectors and describe at the end how to deduce the result in the even rank case. Recall that under this assumption $M_{\sigma}^Y(v) \hookrightarrow M_{\widetilde{\sigma}}^{\widetilde{Y}}(\pi^*(v))$ is an embedding of a constant cycle Lagrangian since we only consider one component.
	
	Consider the moduli spaces $\MYp$ and $\MYm$ in adjacent chambers and embed them inside $\MYtp$ and $\MYtm$ respectively. We know the assertion for the two moduli spaces of stable objects on the K3 surface $\widetilde{Y}$. For our purpose we need an additional property of the birational map. Observe that, since $i^*$ acts on the moduli spaces $\MYtp$ and $\MYtm$, it makes sense to ask whether the birational map is $i^*$-equivariant.
	
	The occuring birational map depends on the wall in the sense of \cite[Thm.\ 5.7]{BayMacMMP}. There are three different types. The first type induces a divisorial contraction of the moduli space. The contraction map contracts curves of stable objects that become $S$-equivalent for a stability condition on the wall. The second type is a wall inducing a flopping contraction. The remaining case is a fake wall, i.e.\ there are no curves in $\MYtp$ and $\MYtm$ that become $S$-equivalent with respect to a stability condition on the wall.
	
	We now treat each of these cases and show that the corresponding birational map is equivariant. 
	
	In case of a flopping contraction or a fake wall there either exists a common open subset whose complement has at least codimension two or the birational map is induced by the composition of spherical twists. Using Lemma \ref{lem:Sph_Twi_equi} we see that in both cases the map is equivariant. 
	
	The case of a wall inducing a divisorial contraction is divided into three subcases. If we are in the Brill--Noether case, the birational map is again defined on an open subset to be a sequence of spherical twists associated to stable spherical objects. 
	
	The second type is the Hilbert--Chow case. Here, the proof uses an isotropic vector $w \in H^*_{\textup{alg}}(\widetilde{Y},\Z)$ that satisfies $(\pi^*v, w)=1$ to identify $\Db(\widetilde{Y})$ with the bounded derived category of the fine moduli space $M$ of stable objects with class $w$. The automorphism $i$ of $\widetilde{Y}$ induces an involution $j$ on the K3 surface $M$ via pullback. This induces an autoequivalence $j^* \in \textup{Aut}(\Db(M))$. Denote by $\mathcal{E}$ the universal complex in $\Db(\widetilde{Y} \times M)$ inducing the derived equivalence. The universal property of $\mathcal{E}$ induces an isomorphism
	\[
	(i\times j)^*\mathcal{E} \cong \mathcal{E},
	\]
	where $i\times j \colon \widetilde{Y} \times M \rightarrow\widetilde{Y} \times M$ is the involution on the product. This implies that under the derived equivalence 
	\[
	\Phi_{\mathcal{E}} \colon \Db(\widetilde{Y}) \cong \Db(M)
	\]
	induced by the universal complex $\mathcal{E}$, the action of $i^*$ on $\Db(\widetilde{Y})$ is identified with the action of $j^*$ on $\Db(M)$. 
	Bayer and Macrì then show that under this identification the moduli spaces $\MYtp$ and $\MYtm$ become isomorphic via the derived autoequivalence $(\_)^{\vee}[2]$. Since this autoequivalence is obviously invariant under $j^*$, the birational map for the original moduli spaces will be invariant under the action of $i^*$. 
	
	The last occurring type is called Li--Gieseker--Uhlenbeck. We again have a moduli space $M$, such that
	\[
	\Phi_{\mathcal{E}} \colon \Db(\widetilde{Y}) \cong \Db(M,\alpha)
	\]
	for a universal complex $\mathcal{E} \in \Db(\widetilde{Y}\times M, 1\boxtimes \alpha)$, but this time we may have a Brauer class $\alpha\in H^2(\widetilde{Y},\mathcal{O}_{\widetilde{Y}}^*)_{\textup{tor}}$ of order at most 2. As $i$ is a non-symplectic involution, we again obtain an automorphism $j$ of the twisted K3 surface $(M,\alpha)$ via pullback. The involutions $i$ and $j$ are again compatible in the sense that the derived equivalence $\Phi_{\mathcal{E}}$ identifies the action of $i^*$ on $\Db(\widetilde{Y})$ with the action of $j^*$ on $\Db(M,\alpha)$. Using this identification, the birational map is given on an open subset as the functor $(\_)^{\vee} \otimes L[2]$, where $L$ is a line bundle on $M$. Since we assume $Y$ to be a generic Enriques surface, the involution $i$ acts trivially on $H^*_{\textup{alg}}(\widetilde{Y},\mathbb{Z})$. Hence, the automorphism $j$ acts trivially on the Mukai lattice of $(M,\alpha)$ and the line bundle $L$ is invariant under the induced action of $j^*$. We conclude that all the possible birational maps are equivariant.
	
	Denote by 
	\[
	f\colon \MYtp \dashrightarrow \MYtm
	\]
	the biratonal map and $U$ the biggest open subset where $f$ is an isomorphism. Since $\MYtp$ and $\MYtm$ are both hyperkähler, the set $U$ agrees with the maximal open subset, where the map $f$ is defined. The intersection $U\cap \MYp \subset \MYtp$ is non-empty by Proposition \ref{prop:prop_cc_bir}. Since $M_{\sigma_{\text{\textpm}}}^Y(v)$ can be identified with the fixed set of the involution $i^*$ and $f$ is equivariant, the restriction to the constant cycle Lagrangian $f|_{\MYp}\colon \MYp \dashrightarrow \MYm$ gives the desired birational transformation. This finishes the proof for Mukai vectors of odd rank. \smallskip
	
	If the rank is even, we can still deduce that the image of each of the two components of the moduli space $M_{\sigma_{\pm}}^Y(v)$ under $\pi^*$ intersects the open set $U$ of the map $f$. Indeed, the pullback morphism $\pi^*$ is étale onto its image and, therefore, its image is still of Kodaira dimension 0. Thus, the above argument gives us an $i^*$-equivariant Fourier--Mukai transform 
	\[
	\widetilde{\Phi} \colon \Db(\widetilde{Y}) \cong \Db(\widetilde{Y})
	\]
	inducing on an open subset a birational map between $\pi^*(\MYp)$ and $\pi^*(\MYm)$. This means that each of the two components of $\MYp$ gets mapped to precisely one of the two components of $\MYm$. 
	
	To deduce the result for the moduli space we just observe that $\pi^*$ corresponds to forgetting the equivariant structure. Indeed, the functor $\widetilde{\Phi}$ descends to a Fourier--Mukai transform 
	\[
	\Phi \colon \Db(Y) \cong \Db(Y)
	\] of the Enriques surface compatible with $\widetilde{\Phi}$ as in \cite[Thm.\ 4.5]{BridgelandMaciociaFMforquotients}. Thus, they give rise to the following commutative square
	\begin{center}
		\begin{tikzcd}
			\pi^*(\MYp)\arrow[r,dashed, "g"] & \pi^*(\MYm) \\
			\MYp \arrow[u, "\pi^*"] \arrow[r,dashed, "g'"] & \MYm \arrow[u, "\pi^*"].
		\end{tikzcd}
	\end{center}
	The fact that $g$ is birational allows us to conclude that $g'$ is birational which finishes the proof. 
\end{proof}
\subsection{Birational moduli spaces}

We go on to study the geometry of moduli spaces of stable objects on K3 and Enriques surfaces. The strategy in both cases will be the same. Suppose we are given Mukai vector $v$, which satisfies the assumption of \cite[Thm.\ 1.1]{BayMacMMP} for K3 surfaces and the ones from Theorem \ref{thm:mod_enr_wall_cross} for Enriques surfaces. We already know that for different generic stability conditions $\sigma, \tau$ the corresponding moduli spaces $M_{\sigma}(v)$ and $M_{\tau}(v)$ are birational. Applying an autoequivalence $\Phi$ of the surface induces an isomorphism 
\[
M_{\sigma}(v) \cong M_{\Phi.\sigma}(\Phi^H(v)),
\]
where $\Phi^H$ is the corresponding cohomological Fourier--Mukai functor. 
Thus, moduli spaces of stable objects with respect to a generic stability condition in the same orbit as $v$ under the action of the group of autoequivalences on the Mukai lattice are as well birational. In this way we can reduce the study of birational types of moduli spaces to the question of the orbit of the Mukai vector $v$ in the Mukai lattice. We will use this strategy throughout this section. 

Let us start with the hyperkähler manifold. 
Since the torsion-free part of the second cohomology of an Enriques surface is isometric to $U\oplus E_8(-1)$, Mukai vectors $\widetilde{v}=(r, c_1, s)$ of the form $\widetilde{v}=\pi^*(v)$ will have $c_1\in U(2) \oplus E_8(-2)$.

\begin{lem}
	\label{prop:K3_bir_Hilb}
	Let $\widetilde{Y}$ be a K3 surface with a fixed-point-free involution. Consider a primitive Mukai vector $\widetilde{v}=(r,c_1,s)$ such that $c_1\in U(2) \oplus E_8(-2) \subset \Pic(\widetilde{Y})$, $r$ odd and $\widetilde{v}^2>0$. Then, for generic $\widetilde{\sigma}$ the moduli space $M_{\widetilde{\sigma}}^{\widetilde{Y}}(\widetilde{v})$ is birationally equivalent to some Hilbert scheme of points on $\widetilde{Y}$.
\end{lem}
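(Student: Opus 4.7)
The plan is to reduce $\widetilde{v}$ to the Mukai vector of a Hilbert scheme by a Fourier--Mukai autoequivalence of $\Db(\widetilde{Y})$ and then invoke Theorem \ref{thm:BM_MMP}. By that theorem, the birational class of $M_{\widetilde{\sigma}}^{\widetilde{Y}}(\widetilde{v})$ depends only on $\widetilde{v}$ for generic $\widetilde{\sigma}$, and any $\Phi\in\textup{Aut}(\Db(\widetilde{Y}))$ identifies $M_{\widetilde{\sigma}}(\widetilde{v})$ with $M_{\Phi_*\widetilde{\sigma}}(\Phi_H(\widetilde{v}))$, where $\Phi_H$ denotes the induced action on the Mukai lattice. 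It therefore suffices to produce $\Phi$ whose action sends $\widetilde{v}$ to $v_0\coloneqq(1,0,1-n)$ with $n=1+\widetilde{v}^2/2\ge 2$, since $v_0$ is the Mukai vector of an ideal sheaf of $n$ points and its associated moduli space is birational to $\textup{Hilb}^n(\widetilde{Y})$.

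By Mukai--Orlov's derived Torelli theorem (in the form of Huybrechts--Macrì--Stellari), the group $\textup{Aut}(\Db(\widetilde{Y}))$ surjects onto the index-two subgroup of orientation-preserving Hodge isometries of the Mukai lattice, and composition with the shift $[1]$ flips the orientation, so every Hodge isometry is realised up to sign. Both $\widetilde{v}$ and $v_0$ lie in the sublattice $L\coloneqq U\oplus U(2)\oplus E_8(-2)\subseteq H^*_{\textup{alg}}(\widetilde{Y},\Z)$, in which the $U$-summand encodes the $(r,s)$-coordinates. Hence it is enough to exhibit an integral isometry of $L$ sending $\widetilde{v}$ to $v_0$ and to extend it by the identity on the orthogonal complement.

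For the construction of the isometry I invoke Eichler's criterion: in an even lattice containing $U$ as an orthogonal summand, the $O$-orbit of a primitive vector is determined by its square and by its class in $L^*/L$ modulo its divisibility. Both vectors have square $2n-2$. The vector $v_0$ has divisibility $1$; for $\widetilde{v}$, pairing with $(0,0,-1)\in L$ yields $r$, so the divisibility divides $r$, while $L^*/L\cong(\Z/2)^{10}$ has exponent $2$. Since $r$ is odd, these two constraints force $\textup{div}(\widetilde{v})=1$, and so its class in the discriminant group is trivial. Eichler's criterion then supplies the required isometry, and after lifting via derived Torelli and correcting the sign with a $[1]$-shift if necessary, we obtain the autoequivalence $\Phi$.

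The main potential obstacle is the lattice-theoretic step: one must carefully combine the oddness of $r$ with the $2$-elementary structure of $U(2)\oplus E_8(-2)$ to pin down the divisibility, and verify that the isometry of $L$ extends coherently to the whole algebraic Mukai lattice so that derived Torelli actually applies. In the generic case $\Pic(\widetilde{Y})=U(2)\oplus E_8(-2)$, which is the one relevant for the Enriques applications, these verifications are transparent.
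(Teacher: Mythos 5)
Your overall strategy---transport $\widetilde v$ to $v_0=(1,0,1-n)$ by a derived autoequivalence and then quote Theorem \ref{thm:BM_MMP}---is the right one, and it is essentially what the paper's one-line proof outsources to Yoshioka: the cited Theorem 4.6 of \cite{yoshiokatw} constructs the required Fourier--Mukai transforms explicitly (compositions of line-bundle twists by classes in $U(2)\oplus E_8(-2)$ and (weakly) spherical twists, i.e.\ the reflection in $(1,0,1)$, arranged so as to reduce $|r|$ step by step; this descent is precisely where the oddness of $r$ is used). The gap in your argument is the step you yourself flag as the main obstacle. Eichler's criterion is \emph{not} valid for ``an even lattice containing $U$ as an orthogonal summand'': it requires \emph{two} orthogonal hyperbolic planes, $L\cong U\oplus U\oplus K$, and the lattice $L=U\oplus U(2)\oplus E_8(-2)$ admits no such splitting. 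Indeed, $A_L\cong(\Z/2)^{10}$ needs $10$ generators, while a splitting $L\cong U\oplus U\oplus K$ would give $A_L\cong A_K$ with $K$ of rank $8$; already $U\oplus U$ cannot be a finite-index sublattice of $U\oplus U(2)$, since $\det(U\oplus U)=1$ while $\det(U\oplus U(2))=4$. With only one hyperbolic plane the Eichler transvections need not act transitively on primitive vectors of fixed square and fixed class in $A_L$ (in general the orbits also see the genus of the orthogonal complement), so ``Eichler's criterion then supplies the required isometry'' is unjustified. The transitivity you need is true here for odd $r$, but it has to be proved by hand via the rank-reduction algorithm---which is exactly the content of the result the paper cites. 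Your preliminary reductions are fine: $\mathrm{div}(\widetilde v)\mid r$ and $\mathrm{div}(\widetilde v)$ divides the exponent $2$ of $A_L$, so $\mathrm{div}(\widetilde v)=1$; and an isometry acting trivially on $A_L$ does extend by the identity on $L^{\perp}$ to a Hodge isometry of the full Mukai lattice.

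Two smaller points. First, to apply Theorem \ref{thm:BM_MMP} after acting by $\Phi$ you need $\Phi_*\widetilde\sigma$ to lie again in the distinguished component $\St(\widetilde Y)$; this holds for the standard autoequivalences realizing the derived Torelli surjection (and in particular for the explicit twists above), but it is not automatic for an arbitrary lift and should be said. Second, the sign/orientation correction via the shift $[1]$ lands you on $-v_0$ rather than $v_0$; this is harmless since $M(-v_0)\cong M(v_0)$ via $E\mapsto E[1]$, but as written the sentence suggests the shift fixes the orientation without changing the vector.
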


\begin{proof}
	By the discussion above we just have to show that the Mukai vector $\widetilde{v}$ can be transformed to $(1,0,1-n)$ using the action of autoequivalences on the Mukai lattice. By our choice of $\widetilde{v}$, it is the pullback of a primitive Mukai vector on $Y$. For Enriques surfaces, however, this has already been shown in the proof of \cite[Thm.\ 4.6]{yoshiokatw}. The autoequivalences inducing these cohomlogical Fourier--Mukai transformations lift to the K3 surface by \cite[Thm.\ 4.5]{BridgelandMaciociaFMforquotients}. 
\end{proof}
The statement also holds true for Enriques surfaces. On a generic Enriques surface there are no spherical objects. However, exceptional objects $\mathcal{E}$ give rise to \textit{weakly spherical twists} \cite[Prop.\ 6.11]{Nuerpro}. Their Fourier--Mukai kernels $\mathcal{P_{\mathcal{E}}}\in \Db(Y\times Y)$ can be defined using the distinguished triangle
\[
\mathcal{E}^{\vee}\boxtimes \mathcal{E} \oplus (\mathcal{E}\otimes \omega_Y)^{\vee} \boxtimes \mathcal{E} \otimes \omega_Y \rightarrow \mathcal{O}_{\Delta} \rightarrow \mathcal{P_{\mathcal{E}}} \rightarrow \mathcal{E}^{\vee}\boxtimes \mathcal{E} \oplus (\mathcal{E}\otimes \omega_Y)^{\vee} \boxtimes \mathcal{E} \otimes \omega_Y [1].
\]
The action of $\mathcal{P}_{\mathcal{E}}^H$ on cohomology is given by the reflection along the hyperplane perpendicular to $v(\mathcal{E})$. 
\begin{thm}
	\label{prop:Mod_Enr_bir_Hilb}
	Let $Y$ be a (not necessarily generic) Enriques surface and $v$ be a primitive Mukai vector of odd rank with $v^2>0$. Then, for generic $\sigma \in \St(Y)$, the moduli space $M_{\sigma}(v)$ is birationally equivalent to the Hilbert scheme of points $\textup{Hilb}^{n}(Y)$, where $n= (v^2+1)/2$. 
\end{thm}
\begin{proof}
	The theorem is immediate for generic Enriques surfaces using again the proof of \cite[Thm. 4.6]{yoshiokatw} and weakly-spherical twists. 
	
	Now we consider the non-generic case. Consider a family $\pi \colon \mathcal{Y}\rightarrow B$ of Enriques surfaces over a curve $B$ with central fiber a non-generic Enriques surface and very general fiber a generic Enriques surface. We construct two families. The first one is the relative Hilbert scheme 
	\[\phi \colon\textup{Hilb}_B^n(\mathcal{Y}) \rightarrow B\]
	whose fibers are the Hilbert scheme of points on $\mathcal{Y}_b$. The second is the relative moduli space of stable objects  \[
	\psi\colon M_{\mathcal{Y}/B}(v) \rightarrow B.
	\]
	Its existence is due to the recent preprint \cite{RelativeBridgelandModuliSpaces}, where the concept of stability conditions in families is established. 
	
	All fibers of both families are smooth $K$-trivial varieties. By the above we know for very general $b \in B$ that the fibers $\phi^{-1}(b)$ and $\psi^{-1}(b)$ are birationally equivalent. Fix such a birational isomorphism and the closure of its graph in the product $\textup{Hilb}^n(\mathcal{Y}_b) \times M_{\mathcal{Y}_b}(v)$. There is at least one irreducible component of the relative Hilbert scheme 
	\[\textup{Hilb}_B(\textup{\textup{Hilb}}_B^n(\mathcal{Y}) \times_B M_{\mathcal{Y}/B}(v))
	\]
	that contains uncountably many of these graphs. The universal subvariety of this component of the Hilbert scheme restricted to the special fiber establishes then the desired birational isomorphism, cf.\ \cite[Thm.\ 1]{MatsusakaMumford}. 
\end{proof}
In fact, the birational equivalence is a $K$-equivalence. Since the above construction is compatible with $\pi^*$, the inclusion of moduli spaces of stable sheaves is up to $K$-equivalence the inclusion of the Hilbert scheme of points inside the Hilbert scheme of the covering K3 surface. Hence, the constant cycle Lagrangians are already CH$_0$-trivial. 

We now want to determine the birational type in the even rank case.

\begin{prop}
	\label{prop:BirTypeEven}
	Let $Y$ be a (not necessarily generic) Enriques surface and $v$ an even rank Mukai vector such that $\pi^*(v)$ is primitive and $v^2>0$. Then, $M^Y_{\sigma}(v)$ is birational to $M^Y_{H}(v')$, where the Mukai vector $v'=(0,c_1,\frac{s}{2})$ has primitive and effective $c_1$ and $H$ is a generic polarization for $v'$.
\end{prop}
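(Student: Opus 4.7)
The plan is to construct an autoequivalence $\Phi \in \textup{Aut}(\Db(Y))$ sending $v$ to a Mukai vector of the desired form $v' = (0, c_1, s/2)$ with $c_1 \in \Pic(Y)$ primitive and effective. Such a $\Phi$ induces an isomorphism $M^Y_{\sigma}(v) \cong M^Y_{\Phi_{*}\sigma}(v')$, and composing with the wall-crossing equivalence of Theorem \ref{thm:mod_enr_wall_cross} (to move from $\Phi_{*}\sigma$ to the prescribed generic $\sigma'$) then yields the proposition.

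First I would lift the problem to the K3 cover via $\Db(Y) \cong \Dbi(\widetilde{Y})$ and look for an $i^{*}$-equivariant autoequivalence $\widetilde{\Phi}$ of $\Db(\widetilde{Y})$ taking $\pi^{*}(v)$ to $\pi^{*}(v')$. Three families of equivariant operations are available: tensoring with $\pi^{*}L$ for $L \in \Pic(Y)$, which shifts the middle component by $r\,\pi^{*}L$; the shift and derived dual; and spherical twists $T_{S}$ along $i^{*}$-invariant spherical objects $S \in \Db(\widetilde{Y})$, most usefully $S = \mathcal{O}_{\widetilde{Y}}$, whose action on $H^{*}_{\textup{alg}}(\widetilde{Y},\Z)$ is the reflection in the $(-2)$-class $(1,0,1)$ and in particular swaps the rank with the negative of the third Mukai coordinate. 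By Lemma \ref{cor:stab_i}, every spherical object stable for some stability condition is automatically $i^{*}$-invariant, so all the twists used live in the equivariant subgroup. Alternating tensoring with these spherical twists produces a Euclidean-algorithm reduction analogous to Yoshioka's \cite[Thm.\ 4.6]{yoshiokatw}, strictly decreasing $|r|$ until the rank reaches zero. Reflections in $(-2)$-classes of $\Pic(Y)$, realized as further equivariant spherical twists, then move the resulting $c_{1}$ into the effective primitive chamber.

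The main obstacle will be to keep every intermediate Mukai vector inside the image of $\pi^{*}\colon H^{*}_{\textup{alg}}(Y,\Z) \hookrightarrow H^{*}_{\textup{alg}}(\widetilde{Y},\Z)$ rather than merely in its $i^{*}$-invariant part, so that the equivariant operations genuinely descend to $\Db(Y)$. For $\otimes\,\pi^{*}L$ and the derived dual this is immediate, while for a spherical twist $T_{S}$ it reduces to a parity check on the Mukai pairing $(\pi^{*}(v), s)$, which can always be arranged by preceding the twist by a suitable tensoring. A final point is ensuring primitivity of $c_{1}$ in $\Pic(Y)$ (rather than only in $\Pic(\widetilde{Y})$), but any leftover divisibility can be absorbed into one additional $\otimes\, \pi^{*}L$ step at the end.
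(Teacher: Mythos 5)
Your overall strategy---act on the Mukai vector by Fourier--Mukai equivalences and invoke Theorem \ref{thm:mod_enr_wall_cross} to absorb the change of stability condition---is the same as the paper's, and the first half of your reduction (tensoring by line bundles alternated with the twist by the structure sheaf, \`a la Yoshioka/Hauzer/Nuer) matches the paper's opening step, which reduces to rank $2$. The gap is in the claim that this Euclidean algorithm continues ``until the rank reaches zero.'' The twist by $\mathcal{O}$ acts on $(r,c_1,\tfrac{s}{2})$ by $(r,c_1,\tfrac{s}{2})\mapsto(-s,c_1,-\tfrac{r}{2})$, so to land at rank $0$ you must first make the degree-four component \emph{exactly} zero, i.e.\ solve the quadratic Diophantine equation $c_1\cdot L+\tfrac{r}{2}L^2=-\tfrac{s}{2}$ in $L\in\Pic(Y)$; a Euclidean reduction only controls $s$ modulo the reachable residues and, because the integrality constraint $s\equiv r \pmod 2$ forces the rank to stay even throughout, its natural terminus is rank $2$, not rank $0$ (note that Yoshioka's Theorem 4.6, your stated model, terminates at rank \emph{one}). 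This exact-vanishing step is where the paper's proof does all its work: at rank $2$ it writes $c_1=a_1e+a_2f+\xi$ using $\Pic(Y)\supset U\oplus E_8(-1)$, uses that primitivity of $\pi^*(v)$ forces $2\nmid c_1$, and explicitly constructs an \emph{isotropic} class $D$ with $D^2=0$ and $D\cdot c_1=-\tfrac{s}{2}$ (case analysis on $c_1$ mod $2$), so that $\exp(D)$ kills the third component and one final weakly spherical twist swaps rank $2$ to rank $0$. Your proposal contains no substitute for this construction.

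Two further points. First, your plan to repair primitivity or effectivity of $c_1$ ``by one additional $\otimes\,\pi^*L$ step at the end'' cannot work: once the rank is $0$, tensoring by a line bundle changes $c_1$ by $rL=0$, so these properties must be secured \emph{before} the final twist (in the paper they come out of the explicit choice of $D$ and the congruence $c_1+2D\equiv c_1 \pmod 2$). Second, the equivariance and descent bookkeeping you set up on the K3 cover is not wrong, but it is unnecessary: the paper works directly on $Y$ with Nuer's weakly spherical twist of $\mathcal{O}_Y$ (which is exceptional rather than spherical on $Y$, so the $(-2)$-reflection formula you quote does not literally apply downstairs), and the descent issues you flag never arise. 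Also beware that on a generic Enriques surface there are no $(-2)$-curves to furnish the additional spherical twists you invoke to move $c_1$ into the effective cone.
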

\begin{proof}
	Again, if $Y$ is generic, we want to show that we can find an autoequivalence whose action on the Mukai lattice sends $v$ to $v'$. This has been done in \cite[Sec.\ 4]{YoshiokaModuliEnriques}. The statement for non-generic Enriques surfaces can, as in the proof of Theorem \ref{prop:Mod_Enr_bir_Hilb}, be obtained via deformation theory. 
\end{proof}
The proposition enables us to apply the results of Saccà \cite[Thm.\ 3.1, Thm.\ 4.4]{SaccaEnriques}.
\begin{cor}
	Let $Y$ be a generic Enriques surface and $v \in H^*_{\textup{alg}}(Y, \Z)$ be an even rank Mukai vector such that $\pi^*(v)$ is primitive and $v^2>0$. Then, $M \coloneqq M_{\sigma}(v)$ is a smooth projective Calabi--Yau variety, i.e.
	\[
	\pushQED{\qed} 
	\omega_M \cong \mathcal{O}_M  \textup{	and	} h^{p,0}(M)=0 \textup{ for } p\neq 0, v^2+1,
	\]
	and its fundamental group is $\mathbb{Z}/2\mathbb{Z}$. 
\end{cor}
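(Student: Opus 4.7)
The plan is to reduce the statement, via the preceding proposition, to a moduli space to which Saccà's theorem applies directly, and then to transport the resulting Calabi--Yau properties back along the birational map.

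More concretely, by the proposition immediately above, there exists a Mukai vector $v' = (0,c_1,\tfrac{s}{2})$ with $c_1$ primitive and effective and a generic stability condition $\sigma'$ for $v'$ such that $M^Y_\sigma(v)$ and $M^Y_{\sigma'}(v')$ are birationally equivalent. Moduli spaces of this shape parametrize pure dimension-one sheaves supported on curves in $|c_1|$; these are exactly the spaces treated in Saccà's thesis, where they are shown to be smooth projective Calabi--Yau manifolds in the strict sense that $\omega \cong \mathcal{O}$ and $h^{p,0}=0$ for $p\neq 0, \dim$. At the level of stable complexes rather than sheaves we may invoke the same result once we pass (if necessary) through the wall-crossing birational equivalence of Theorem \ref{thm:mod_enr_wall_cross} to land on a chamber of Gieseker stability.

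It then remains to show that being Calabi--Yau is preserved under birational equivalence of the class of moduli spaces under consideration. Both $M \coloneqq M_\sigma(v)$ and $M' \coloneqq M^Y_{\sigma'}(v')$ are smooth projective with torsion canonical bundle, as already recorded in Section \ref{subsec:mod_K3_Enriq}. Birational smooth projective varieties share the plurigenera, so
\[
H^0(M,\omega_M^{\otimes n}) \cong H^0(M',\omega_{M'}^{\otimes n}) \cong \mathbb{C}
\]
for every $n \ge 0$. In particular $H^0(M,\omega_M)\neq 0$; since $\omega_M$ is a torsion line bundle, any nonzero section forces $\omega_M \cong \mathcal{O}_M$. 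For the Hodge numbers $h^{p,0}$, one uses that they are birational invariants of smooth projective varieties: each $h^{p,0}(M)=h^{p,0}(M')$ vanishes for $p \neq 0,v^2+1$ by Saccà's result, and the two dimensions agree since $\pi^\ast(v)^2 = 2v^2$ so that $\dim M = \tfrac12 \pi^\ast(v)^2 = v^2+1 = \dim M'$.

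The step that most deserves care is the application of the previous proposition: one has to verify that the reduction to $v'=(0,c_1,s/2)$ can be carried out while staying within the class of moduli spaces of primitive $\pi^\ast$ and generic stability conditions, so that both endpoints of the birational chain are smooth and $K$-trivial and the birational invariance arguments above apply. Once this is in place, everything else is a formal consequence of Saccà's theorem combined with the birational invariance of $h^{p,0}$ and of the plurigenera.
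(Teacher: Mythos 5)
Your proposal is correct and follows exactly the route the paper intends: the paper's entire argument is the one-line remark that the preceding proposition reduces to a rank-zero Mukai vector to which Saccà's results apply, and you have simply made explicit the standard transport of $\omega_M\cong\mathcal{O}_M$ and the $h^{p,0}$ via birational invariance of plurigenera and of $h^{p,0}$ for smooth projective varieties. (One cosmetic slip: the dimension is $\dim M=\tfrac12\bigl(\pi^*(v)^2+2\bigr)=v^2+1$, not $\tfrac12\pi^*(v)^2$, but this does not affect the argument.)
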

\begin{proof}
	Since the above properties are invariant under birational isomorphisms it suffices to show that these hold for one birational model. We use Proposition \ref{prop:BirTypeEven} and the fact that the missing assumption \cite[Assum.\ 2.16]{SaccaEnriques} needed for Saccà's theorems are proven in this case by Yoshioka \cite[Prop.\ 4.4]{YoshiokaModuliEnriques}.
\end{proof}
\subsection{Birational geometry of moduli spaces}
\label{subsec:MMP}
Throughout this section we assume that $v$ is a Mukai vector such that $\pi^*(v)$ is primitive and $v^2>1$. 

Bayer and Macrì constructed nef divisors $\ell_{\widetilde{\sigma}} \in \NS( M_{\widetilde{\sigma}}^{\widetilde{Y}}(\widetilde{v}))$ naturally associated to a stability condition $\widetilde{\sigma} \in \St(\widetilde{Y})$ \cite[Sec.\ 4]{BayMacPrBiGeo}. They can be defined using the composition 
\[
\St(\widetilde{Y})\xrightarrow{\mathcal{Z}} \mathcal{P}_0^+(\widetilde{Y}) \xrightarrow{I} (\pi^*(v))^{\perp} \xrightarrow{\theta_{\Wtilde}}  \NS( M_{\widetilde{\sigma}}^{\widetilde{Y}}(\pi^*(v))),
\]
where $\mathcal{Z} \colon \St(\widetilde{Y}) \rightarrow \mathcal{P}_0^+(\widetilde{Y}) \subset  H^*_{\textup{alg}}(\widetilde{Y}, \mathbb{Z})\otimes \mathbb{C}$ is the covering map which sends a stability condition $\sigma=(Z,\mathcal{P})$ to the element $\Omega_Z$ satisfying $Z(\_)=(\Omega_Z,\_)$ \cite[Thm.\ 1.1]{BridgelandK3}, $I(w)=-\textup{Im}(\frac{w}{(w,\pi^*(v))})$ and $\theta_{\widetilde{\sigma}}$ is the Mukai morphism, which is dual to the one defined in \cite[Sec.\ 8.1]{HuyLehn}.

In our arguments we use the following compatibility result.

\begin{lem}
	\label{lem:comp}
	The diagram
	\begin{center}
		\begin{tikzcd}
			\St(\widetilde{Y}) \arrow[d,"(\pi^*)^{-1}"] \arrow[r, "\mathcal{Z}"] &  \mathcal{P}^+_0(\widetilde{Y}) \arrow[d,"\pi_*"] \arrow[r, "I"] & \pi^*(v)^{\perp} \arrow[r, "\theta_{\Wtilde}"] \arrow[d, "\pi_*"] & \NS(M_{\widetilde{\sigma}}^{\widetilde{Y}}(\pi^*(v))) \arrow[d, "(\pi^*)^*"]\\
			\St(Y) \arrow[r, "\mathcal{Z}"] &  \mathcal{P}^+_0(Y) \arrow[r, "I"] & v^{\perp} \arrow[r, "\theta_{\sigma}"] & \NS(M_{\sigma}^Y(v))
		\end{tikzcd}
	\end{center}
	commutes.
\end{lem}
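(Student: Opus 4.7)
The plan is to verify commutativity of the three squares one by one; all three arguments rest on the projection formula for the Mukai pairing,
\[
(\pi_*(a), b)_Y = (a, \pi^*(b))_{\widetilde{Y}},
\]
which expresses the adjunction between $\pi^*$ and $\pi_*$ on cohomology and is immediate from $\pi$ being finite étale of degree two.

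\textbf{First square.} By the definition of the embedding $\pi^*\colon \St(Y)\hookrightarrow \St(\widetilde{Y})$ of Macrì--Mehrotra--Stellari, the central charges satisfy $Z_\sigma(E) = Z_{\widetilde{\sigma}}(\pi^*E)$. Writing $Z_{\widetilde{\sigma}}(-)=(\Omega_{\widetilde{\sigma}},-)_{\widetilde{Y}}$ and pushing the pullback across via the projection formula yields $Z_\sigma(E) = (\pi_*\Omega_{\widetilde{\sigma}}, v(E))_Y$, i.e.\ $\Omega_\sigma = \pi_*\Omega_{\widetilde{\sigma}}$, which is commutativity of the first square.

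\textbf{Second square.} For $w\in \mathcal{P}_0^+(\widetilde{Y})$ both compositions take the form $-\textup{Im}(\pi_*(w)/c)$: going right then down produces $c=(w,\pi^*v)_{\widetilde{Y}}$, while going down then right produces $c=(\pi_*w,v)_Y$. These denominators agree by the projection formula, so the square commutes.

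\textbf{Third square.} This is the substantive step. Write $\phi := \pi^*\colon M := M_\sigma^Y(v)\to \widetilde{M} := M_{\widetilde{\sigma}}^{\widetilde{Y}}(\pi^*v)$, and let $\mathcal{F}$ and $\widetilde{\mathcal{F}}$ denote (quasi-)universal families on $Y\times M$ and $\widetilde{Y}\times\widetilde{M}$ respectively. Since $\phi$ sends $m$ to $\pi^*\mathcal{F}_m$, one has the compatibility
\[
(\pi\times\id_M)^*\mathcal{F} \cong (\id_{\widetilde{Y}}\times\phi)^*\widetilde{\mathcal{F}}
\]
(at worst up to a twist by a line bundle pulled back from $M$, which does not affect the Mukai morphism). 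Starting from the formula for $\theta_{\widetilde{\sigma}}(\widetilde{w})$ as a pushforward from $\widetilde{Y}\times\widetilde{M}$, I would pull back along $\phi$, apply flat base change along the cartesian square involving $\id_{\widetilde{Y}}\times\phi$ and $p_{\widetilde{M}}$, invoke the compatibility above to replace $\widetilde{\mathcal{F}}$ by $(\pi\times\id_M)^*\mathcal{F}$, and then factor the pushforward through $\pi\times\id_M$ and apply the projection formula along this étale double cover. Combined with $\pi^*\sqrt{\textup{td}(Y)} = \sqrt{\textup{td}(\widetilde{Y})}$ and $\pi_*(\widetilde{w}^\vee)=(\pi_*\widetilde{w})^\vee$, this produces exactly $\theta_\sigma(\pi_*\widetilde{w})$.

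The main technical hurdle is the third square: ensuring that the universal families are genuinely compatible under the claimed pullbacks, and carrying out the base change/projection formula bookkeeping cleanly. The first two squares are essentially formal consequences of a single application of the projection formula.
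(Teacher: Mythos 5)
Your argument is correct and is in substance the same as the paper's: the paper disposes of the lemma by citing Macrì--Mehrotra--Stellari (Prop.\ 3.1) for the left square, calling the middle square immediate, and invoking the argument of Nuer (Prop.\ 10.2) for the right square, and the computations you carry out --- the projection formula $(\pi_*a,b)_Y=(a,\pi^*b)_{\widetilde{Y}}$ for the first two squares, and flat base change plus compatibility of (quasi-)universal families for the third, with the line-bundle ambiguity harmless precisely because the classes lie in $v^{\perp}$ --- are exactly the content of those references. No gaps.
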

\begin{proof}
	The commutativity of the left square is proven in \cite[Prop.\ 3.1]{MMSISC}. The proof for the right square is analogous to the proof of \cite[Prop.\ 10.2]{Nuerpro} and the commutativity of the middle square is immediate by adjunction.
\end{proof}
For two adjacent chambers $\mathcal{C}^+, \mathcal{C}^-$ we pick again stability conditions $\sigma_{\pm} \in \mathcal{C}^{\pm}$ and a stability condition $\sigma_0$ on the wall $\mathcal{W}$ separating the two chambers. We identify $\NS(\MYp)$ and $\NS(\MYm)$ using the birational isomorphism $f$ obtained from the proof of Theorem \ref{thm:mod_enr_wall_cross}. The proof shows that these fit into the following commutative diagram
\begin{center}
	\begin{tikzcd}
		\NS(\MYtp) \arrow[d,"(\pi^*)^*"] \arrow[r, "(\widetilde{f}^{-1})^*"]& \NS(\MYtm) \arrow[d,"(\pi^*)^*"]\\
		\NS(\MYp) \arrow[r, "(f^{-1})^*"] & \NS(\MYm),
	\end{tikzcd}
\end{center}
where $\widetilde{f}$ is the birational isomorphism between $\MYtp$ and $\MYtm$ from \cite[Thm.\ 1.1(b)]{BayMacMMP}. 
This identification gives us two maps
\begin{align*}
\ell^{\pm}\colon \mathcal{C}^{\pm} \rightarrow \NS(\MYp).
\end{align*}
We now study the behaviour at the wall $\mathcal{W}$. We denote by $\mathcal{\widetilde{W}}$ the corresponding wall in $\St(\widetilde{Y})$. The stability condition $\widetilde{\sigma_0}$ produces nef and big divisor classes $\ell_{\widetilde{\sigma_0},\pm}$ on $M_{\widetilde{\sigma}_{\pm}}^{\widetilde{Y}}(\pi^*(v))$ which give rise to birational contraction morphisms 
\[
\pi_{\widetilde{\sigma}_{\pm}} \colon M_{\widetilde{\sigma}_{\pm}}^{\widetilde{Y}}(\pi^*(v)) \rightarrow \widetilde{M}_{\pm}.
\] 
For an element $d\in v^{\perp}$ satisfying $d^2\neq 0$ denote by 
\[
\rho_d \colon v^{\perp} \rightarrow v^{\perp}
\]
the involution sending a class $x\in v^{\perp}$ to $x-2\frac{\langle x,d \rangle}{d^2} d$. 
\begin{lem}
	\label{lem:ell_agree_wall}
	The maps $\ell^+$ and $\ell^-$ agree on the wall $\mathcal{W}$ when extended by continuity. 
	\begin{enumerate}[label={\upshape(\roman*)}]
		\item If $\pi_{\widetilde{\sigma}_+}$ is an isomorphism or a small contraction, then the maps $\ell^+$ and $\ell^-$ are analytic continuations of each other.
		\item If $\pi_{\widetilde{\sigma}_+}$ contracts a divisor $\widetilde{D}$, then the maps $\ell^+$ and $\ell^-$ differ in $\NS(\MYp)$ by the reflection $\rho_{[D]}$, where $D=(\pi^*)^*\widetilde{D}$. 
	\end{enumerate}
\end{lem}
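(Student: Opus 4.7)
The plan is to deduce the statement from its K3 analogue proved by Bayer--Macrì \cite{BayMacMMP} by pulling back via the covering morphism, making essential use of Lemma \ref{lem:comp}. Writing $\widetilde{\ell}^\pm$ for the analogous K3 maps defined on the chambers $\widetilde{\mathcal{C}}^\pm \subset \St(\widetilde{Y})$ containing $\widetilde{\sigma}_\pm$, commutativity of the diagram in Lemma \ref{lem:comp} yields
\[
(\pi^*)^* \circ \widetilde{\ell}^\pm = \ell^\pm \circ (\pi^*)^{-1},
\]
so that $\ell^\pm$ is determined by $\widetilde{\ell}^\pm$ via the pullback $(\pi^*)^*$ and the bijection $(\pi^*)^{-1}\colon \widetilde{\mathcal{C}}^\pm \xrightarrow{\sim} \mathcal{C}^\pm$. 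Moreover, the identification $\NS(\MYp) \cong \NS(\MYm)$ in the statement was constructed in the proof of Theorem \ref{thm:mod_enr_wall_cross} as the restriction of the K3 birational identification $\NS(\MYtp) \cong \NS(\MYtm)$, hence is compatible with $(\pi^*)^*$.

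By continuity of each of $\mathcal{Z}$, $I$, $\theta_{\widetilde{\sigma}}$, $(\pi^*)^{-1}$, and $(\pi^*)^*$, the maps $\widetilde{\ell}^\pm$ extend continuously to the wall $\widetilde{\mathcal{W}}$, where they agree by the K3 analogue. Applying $(\pi^*)^*$ transports this equality to $\mathcal{W}$ and yields the first assertion. For case (i), the K3 result further provides an analytic continuation of $\widetilde{\ell}^+$ to $\widetilde{\ell}^-$ across $\widetilde{\mathcal{W}}$; composition with the holomorphic map $(\pi^*)^*$ and pre-composition with $(\pi^*)^{-1}$ preserves analyticity, so $\ell^+$ and $\ell^-$ are analytic continuations of each other across $\mathcal{W}$.

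For case (ii), Bayer--Macrì give $\widetilde{\ell}^- = \rho_{\widetilde{D}} \circ \widetilde{\ell}^+$ under analytic continuation across $\widetilde{\mathcal{W}}$. Using the formula above, the claim reduces to the identity
\[
(\pi^*)^* \circ \rho_{\widetilde{D}} = \rho_{D} \circ (\pi^*)^*, \qquad D = (\pi^*)^*\widetilde{D},
\]
as endomorphisms of the relevant NS groups. Since the formula $\rho_L(\alpha) = \alpha - 2\frac{(\alpha, L)}{(L,L)}L$ is invariant under a uniform positive rescaling of the pairing, this reduces to the compatibility that $(\pi^*)^*$ sends the Beauville--Bogomolov pairing on $\NS(\MYtp)$ to a positive multiple of the corresponding pairing on $\NS(\MYp)$. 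This pairing-compatibility is the main obstacle; it should follow from the commutativity of the right-hand square of Lemma \ref{lem:comp} together with the isometry property of the Mukai morphisms $\theta_{\widetilde{\sigma}}$ and $\theta_\sigma$, the Mukai pairings being compatible up to a factor of $2$ under the degree-$2$ étale cover $\pi^*\colon \MYp \to \MYtp$ onto its image.
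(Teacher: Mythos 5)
Your proposal follows exactly the paper's route: the paper's entire proof is that the lemma ``follows from the corresponding statement for the covering K3 surface \cite[Lem.\ 10.1]{BayMacMMP} and Lemma \ref{lem:comp},'' which is precisely the reduction you carry out in detail. The pairing-compatibility you flag in case (ii) is a genuine point the paper leaves implicit, and your sketch of it is correct: the relevant classes are $i^*$-invariant, and on the invariant part $\pi_*$ rescales the Mukai pairing by a uniform positive factor, so the reflections are intertwined.
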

\begin{proof}
	The assertion follows from the corresponding statement for the covering K3 surface \cite[Lem.\ 10.1]{BayMacMMP} and Lemma \ref{lem:comp}.
\end{proof}
Nuer \cite{Nuerpro} studied the nef divisors $\ell_{\sigma_0,\pm}$ and was able to transfer results from \cite{BayMacPrBiGeo} to moduli spaces of Enriques surfaces and showed that these are semiample as well. However, the question whether or not these divisors are big remained open.
\begin{prop}
	\label{prop:ell_big}
	The nef and semiample divisors $\ell_{\sigma_0,\pm} \in \NS (M^Y_{\sigma_{\pm}}(v))$ are big and induce birational contraction morphisms
	\[
	\pi_{\sigma_{\pm}}\colon M^Y_{\sigma_{\pm}}(v)\rightarrow M_{\pm}.
	\]
\end{prop}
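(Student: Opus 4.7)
The plan is to deduce bigness of $\ell_{\sigma_0,\pm}$ from the corresponding bigness on the covering K3 via the finite morphism $\pi^*$, using the constant cycle Lagrangian property of the image as the crucial input.

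First I would use the commutative diagram of Lemma~\ref{lem:comp} to identify $\ell_{\sigma_0,\pm}$ with $(\pi^*)^* \ell_{\widetilde{\sigma_0},\pm}$, where $\pi^*\colon \MYp \to \MYtp$ (and analogously on the minus side) is generically finite onto its image, a constant cycle Lagrangian $L_\pm$ by Proposition~\ref{prop:Mod_enr_CCL}. From the Bayer--Macrì theory, $\ell_{\widetilde{\sigma_0},\pm}$ is nef, big, and semiample on the K3 moduli space and induces a birational contraction $\pi_{\widetilde{\sigma}_\pm}$ with $\ell_{\widetilde{\sigma_0},\pm}=\pi_{\widetilde{\sigma}_\pm}^*A$ for some ample class $A$ on $\widetilde{M}_\pm$. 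Since $\pi^*$ is finite, bigness of $\ell_{\sigma_0,\pm}$ is equivalent to bigness of the restriction $\ell_{\widetilde{\sigma_0},\pm}|_{L_\pm}$, which in turn reduces to the generic finiteness of $\pi_{\widetilde{\sigma}_\pm}|_{L_\pm}$---equivalently, to $L_\pm$ not being contained in the exceptional locus of $\pi_{\widetilde{\sigma}_\pm}$.

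The heart of the proof is therefore to exclude $L_\pm$ from this exceptional locus, and I would reuse the strategy following Proposition~\ref{prop:prop_cc_bir}. The exceptional locus of a birational morphism between smooth projective varieties is uniruled (via \cite[IV, Prop.\ 1.5]{KollarRationalCurves}, exactly as used in that earlier proof), while the observation recorded right after Proposition~\ref{prop:prop_cc_bir} says that a constant cycle Lagrangian of non-negative Kodaira dimension inside a smooth symplectic variety cannot be contained in any uniruled subvariety. Since $L_\pm$ is a constant cycle Lagrangian and inherits Kodaira dimension zero from the $K$-trivial Enriques moduli under the generically étale map $\pi^*$, the required non-containment follows. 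Combined with Nuer's semiampleness, the resulting bigness yields the birational contraction $\pi_{\sigma_\pm}\colon \MYp \to M_\pm$.

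The main obstacle I anticipate is the bookkeeping around $L_\pm$: one must verify irreducibility of each component's image and that $K$-triviality of the Enriques moduli really passes to non-negative Kodaira dimension on $L_\pm$ through the generically étale $2{:}1$ map. Once these are pinned down, the argument applies uniformly across the wall types in the Bayer--Macrì classification, since in each case the exceptional locus of $\pi_{\widetilde{\sigma}_\pm}$ is either empty (fake wall, where bigness is automatic) or uniruled of positive dimension (flopping or divisorial contraction).
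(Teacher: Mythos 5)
Your proposal is correct and follows essentially the same route as the paper: reduce bigness of $\ell_{\sigma_0,\pm}$ to showing that the image $\pi^*(M^Y_{\sigma_\pm}(v))$, a constant cycle Lagrangian of non-negative Kodaira dimension, is not contained in the contracted locus of $\pi_{\widetilde{\sigma}_\pm}$, and then invoke the remark following Proposition~\ref{prop:prop_cc_bir}. The only cosmetic difference is that you cite Kollár's uniruledness of positive-dimensional fibers uniformly, whereas the paper goes through the Bayer--Macrì case distinction and sources uniruledness of a contracted negative-square divisor to Huybrechts and rational chain connectedness of a small-contraction locus to Hacon--McKernan--Shokurov.
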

\begin{proof}
	Consider the images of the moduli spaces $\pi^*(M^Y_{\sigma_{\pm}}(v)) \subset M_{\widetilde{\sigma}_{\pm}}^{\widetilde{Y}}(\pi^*(v))$. A curve $C \subset M^Y_{\sigma_{\pm}}(v)$ gets contracted to a point under the morphism associated to the linear system $| \ell_{\sigma_0,\pm} |$ if and only if its image $\pi^*(C) \subset M_{\widetilde{\sigma}_{\pm}}^{\widetilde{Y}}(\pi^*(v))$ gets contracted to a point. 
	
	There are three possible cases for the contraction on the K3 side \cite[Thm.\ 1.4]{BayMacPrBiGeo}. If the wall is a fake wall, then no curve gets contracted and the same remains true on the Enriques side. If the morphism induced by the big line bundle has an exceptional locus $B$ that is contracted, there are two possibilities. Firstly, the induced morphism is a divisorial contraction. In this case the divisor $B$ has negative square with respect to the Beauville--Bogomolov form and is uniruled \cite[Thm. 1.2]{WierzbaContractionsSymplectic}. Secondly, the codimension of the subvariety $B$ that is contracted is greater than one. In this case the general fiber of the restriction of $\pi_{\sigma_{\pm}}$ to $B$ is even rationally chain connected and, thus, $B$ is uniruled \cite[Thm.\ 1.2]{WierzbaContractionsSymplectic}.
	
	In either case the image $\pi^*(M^Y_{\sigma_{\pm}}(v))$ cannot be contained in the uniruled variety $B$ since it is a constant cycle Lagrangian of non-negative Kodaira dimension. 
\end{proof}
We now discuss minimal models of moduli spaces $M_{\sigma}(v)$ of stable objects on generic Enriques surfaces for generic $v$. 

Consider the Mukai morphism 
\begin{equation*}
\theta_{\sigma} \colon v^{\perp}\rightarrow \NS(M_{\sigma}(v)). 
\end{equation*}

In the case of a K3 surface and moduli of stable sheaves, Yoshioka \cite{YoshiokaAbelian} has proven that this morphism induces an isometry between $v^{\perp}$ with the Mukai pairing and the Néron--Severi group with the Beauville--Bogomolov form on the hyperkähler variety. This result has been generalized to Bridgeland stability conditions by Bayer and Macrì \cite[Thm.\ 6.10]{BayMacPrBiGeo}. Thus, for $a\in v^{\perp}$ to be mapped to a big and movable divisor its square has to be positive. 

Using Lemma \ref{lem:comp} and the diagram below it, to show that $\theta_{\sigma}$ is an isomorphism for moduli spaces of stable objects on Enriques surfaces one can apply Theorem \ref{prop:Mod_Enr_bir_Hilb} and Proposition \ref{prop:BirTypeEven}. Hence, it suffices to show this in the odd rank Mukai vector case for the Hilbert scheme, where one can easily check that $(\pi^*)^*$ is surjective , cf.\ \cite[Cor.\ A.4]{YoshiokaAbelian}. For an even rank Mukai vector one uses \cite[Lem.\ 12.3]{NuerYoshioka}.

However, it is unclear whether the square of a class mapping to a big and movable divisor is positive. Small evidence for this assertion is that the ample divisors constructed by Huybrechts and Lehn \cite[Thm.\ 8.1.11]{HuyLehn} and Nuer \cite[Cor.\ 12.6]{Nuerpro} all satisfy this property. 
This property is the only one missing to imitate the proof of \cite[Thm.\ 1.2(b)]{BayMacMMP}, where it is shown that all birational models of a moduli space of stable complexes on a K3 surface are again moduli spaces for a different stability condition. 

On the other hand, it is important to note that Nuer and Yoshioka \cite[Ex.\ 8.23]{NuerYoshioka} found examples of walls, which induce a small contraction, but the moduli spaces in adjacent chambers are isomorphic. It is unclear whether or not the birational model obtained by flopping the contraction is again a moduli space. 
	\bibliography{pub_bib}
	\Addresses
\end{document}